\def\vbar{\mathchoice{\vrule height6.3ptdepth-.5ptwidth.8pt\kern- .8pt}
{\vrule height6.3ptdepth-.5ptwidth.8pt\kern-.8pt} {\vrule
height4.1ptdepth-.35ptwidth.6pt\kern-.6pt} {\vrule
height3.1ptdepth-.25ptwidth.5pt\kern-.5pt}}
\def\<{\langle}
\def\>{\rangle}
\newtheorem{thm}{Theorem}[section]
\newtheorem{cor}[thm]{Corollary}
\newtheorem{prop}[thm]{Proposition}
\newtheorem{exam}[thm]{Example}
\newtheorem{nonexam}[thm]{Non-Example}
\newtheorem{rmk}[thm]{Remark}
\theoremstyle{definition}
\newtheorem{defn}{Definition}[section]
\theoremstyle{remark}
\begin{document}
\title{On compatible Leibniz algebras}
\author{
{ Abdenacer Makhlouf $^{1}$\footnote {  E-mail: abdenacer.makhlouf@uha.fr}
 \ and
Ripan Saha$^{2}$\footnote {Corresponding author,  E-mail:  ripanjumaths@gmail.com}
}\\
{\small 1.  Universit\'{e} de Haute Alsace, IRIMAS-D\'epartement de math\'{e}matiques},\\
{\small 18, rue des Fr\`{e}res Lumi\`{e}re 68093 Mulhouse, France} \\
{\small 2.  Department of Mathematics, Raiganj University
Raiganj 733134, West Bengal, India }}
\date{}
\maketitle
\begin{abstract}
In this paper, we study compatible Leibniz algebras. We characterize compatible Leibniz algebras in terms of Maurer-Cartan elements of a suitable differential graded Lie algebra. We define a cohomology theory of compatible Leibniz algebras which in particular controls a  one-parameter formal deformation theory of this algebraic structure. Motivated by a classical application of cohomology, we moreover study the abelian extension of compatible Leibniz algebras.
\end{abstract}

\noindent \textbf{Key words}: Leibniz algebra, Compatible Leibniz algebra, Cohomology, Formal deformation.

\noindent \textbf{Mathematics Subject Classification 2020}: 17A30, 17A32, 17D99, 17B55.


\numberwithin{equation}{section}

\section{Introduction}
In \cite{loday-di}, J.-L. Loday introduced some new type  of algebras  along with their (co)homologies and studied the associated operads. Leibniz algebras and their Koszul duals, Zinbiel algebras are examples of such algebras. A Leibniz algebra is a vector space $\mathfrak g$ equipped with a bilinear map $[~,~]$ satifying the following Leibniz identity: 
$$[x,[y,z]]= [[x,y],z]-[[x,z],y] ~~\mbox{for}~x,~y,~z \in \mathfrak g.$$
In the presence of skew-symmetry the Leibniz identity reduces
to Jacobi identity, and therefore, Lie algebras are examples of
Leibniz algebras. Hence, Leibniz algebras present a  non-antisymmetric analogue of Lie algebras. In fact, such algebras had been first considered by Bloch \cite{bloch} in 1965, who called them $D$-algebras. Loday \cite{loday-book} has investigated Leibniz algebras in connection with properties of cyclic homology and Hochschild homology of matrix algebras. Leibniz algebras also appeared in Mathematical Physics and in the literature they are also known Loday algebras.

During the last decade Leibniz algebras and their properties
have been investigated intensively, however, there are various aspects where
information about these algebras are not known. In this paper, we introduce and study a notion of compatible Leibniz algebras. Two Leibniz algebras $(\mathfrak g, [~,~]_1)$ and $(\mathfrak g, [~,~]_2)$ over a field $\mathbb{K}$ are called compatible if for any $\lambda_1, \lambda_2 \in \mathbb{K}$, the following bilinear operation
$$[x,y]= \lambda_1[x,y]_1 + \lambda_2 [x,y]_2,$$
for all $x,y \in \mathfrak g$ defines a Leibniz algebra structure on $\mathfrak g$. In fact, any linear combination of the brackets defines a Leibniz algebra is equivalent to the sum of brackets $[~,~]_1+[~,~]_2$ defines a Leibniz algebra structure on $\mathfrak{g}$. Golubchik and Sokolov \cite{golu1, golu2, golu3} studied compatible Lie algebras and showed that compatible Lie algebras are closely related to Nijenhuis deformations of Lie algebras, classical Yang-Baxter equations and  principal chiral fields. Odesskii and Sokolov \cite{Odesskii2, Odesskii3} studied compatible associative algebras and their relations with associative Yang-Baxter equations, quiver representations and also studied compatible Lie brackets related to elliptic curves \cite{Odesskii1}. Compatible bialgebras were discussed in \cite{mar}. In the geometric context, compatible Poisson structures appeared in the mathematical study of biHamiltonian mechanics \cite{mag-mor, kos, dotsenko}. In \cite{CDM}, the authors studied the compatible associative algebras from the cohomological point of view, and in a similar context  compatible Hom-associative algebras  were considered in \cite{CS22}. In \cite{comp-lie}, the authors studied compatible Lie and Hom-Lie algebras and also studied the cohomology and deformation theory for those algebras. Homotopy version of compatible Lie algebras were studied in \cite{das22}. For some more interesting study of various type of compatible algebras and their applications, see \cite{wu, uchino, stro}.

The  algebraic deformation theory for associative algebras based on formal power series were introduced by Gerstenhaber in \cite{gers, gers2}, where it was shown that they are intimately connected to cohomology groups. Nijenhuis and Richardson extended one-parameter formal  deformation theory to  Lie algebras in \cite{nij-ric}. Later following Gerstenhaber, deformation theory are studied extensively for other algebraic structures. To study deformation theory of a type of algebra one needs a suitable cohomology, called deformation cohomology which controls deformations in question. In the case of associative algebras, Gerstenhaber showed that deformation cohomology is Hochschild cohomology \cite{hoch} and for Lie algebras, the associated deformation cohomology is Chevalley-Eilenberg cohomology.

The work of the present paper is organised as follows: In Section \ref{sec2}, we recall the definition, examples, representation, and cohomology of Leibniz algebras. In Section \ref{sec3}, we define the notion of compatible Leibniz algebras, give some examples as well as a classification in low dimensions. Moreover, we discuss representations of compatible Leibniz algebras. In Section \ref{sec5}, we construct a suitable differential graded Lie algebra and characterize the compatible Leibniz algebras as a Maurer-Cartan elements of this graded Lie algebra. In Section \ref{sec6}, we define a cohomology theory for compatible Leibniz algebras by combining both the cochains for the given Leibniz algebras. In Section \ref{sec7}, we define one-parameter formal deformation theory for compatible Leibniz algebras, study infinitesimal deformations, and show that our cohomology defined in Section \ref{sec6} is the deformation cohomology. Finally, in Section \ref{sec8}, we study abelian extensions for compatible Leibniz algebras and show that equivalence classes of such extensions are in one-to-one correspondence with the elements of a second cohomology group.
\section{Preliminaries}\label{sec2}
In this section, we recall the basics of Leibniz algebras which will be required throughout the paper.
\begin{defn}   
Let $\mathbb{K}$ be a field. A Leibniz algebra is a vector space $\mathfrak g$ over $\mathbb{K},$ equipped with a $\mathbb{K}$-bilinear map (known as bracket operation) that  satisfies the Leibniz identity: 
$$[x,[y,z]]= [[x,y],z]-[[x,z],y] ~~\mbox{for all }~x,~y,~z \in \mathfrak g.$$
\end{defn}
Any Lie algebra is automatically a Leibniz algebra, as in the presence of skew symmetry, the Jacobi identity is equivalent to the Leibniz identity. Therefore, Leibniz algebras are generalization of Lie algebras.
\begin{exam}\label{example-1}
Suppose $(\mathfrak g,d)$ is a differential Lie algebra with the Lie bracket $[~,~]$. Then $\mathfrak g$ inherits a Leibniz algebra structure with the bracket operation $[x,y]_d:= [x,dy]$. This new bracket on $\mathfrak g$ is  called the derived bracket.
\end{exam}
\begin{exam}\label{ex}\label{example-2}
Suppose $\mathfrak g$ is a three dimensional vector space spanned by \linebreak$\{e_1,~e_2,~e_3\}$ over $\mathbb{C}$. Define a bilinear map $[~,~]: \mathfrak g\times \mathfrak g \longrightarrow \mathfrak g$ by $[e_1,e_3]=e_2$ and $[e_3,e_3]= e_1$, all other products of basis elements being $0$. Then $(\mathfrak g,[~,~])$ is a 3-dimensional Leibniz algebra over $\mathbb{C}$ \cite{A3}. 
\end{exam}

\begin{defn}
A morphism $\phi : (\mathfrak g_1, [~,~]_1) \rightarrow (\mathfrak g_2, [~,~]_2)$ of Leibniz algebras is a $\mathbb{K}$-linear map satisfying
$$\phi ([x,y]_1) = [\phi (x), \phi (y)]_2,~~\forall x, y \in \mathfrak g_1.$$
\end{defn}

Let $\mathfrak g$ be a Leibniz algebra. A representation of $\mathfrak g$ is a vector space $M$ equipped with two actions (left and right) of $\mathfrak g$,
$$[~,~]:\mathfrak g\times M\longrightarrow M~~\mbox{and}~[~,~]:M \times \mathfrak g \longrightarrow M ~~\mbox{such that}~$$ 
$$[x,[y,z]]=[[x,y],z]-[[x,z],y]$$
holds, whenever one of the variables is from $M$ and the two others from $\mathfrak g$.

For all $n\geq 0$, set $CL^n({\mathfrak g}; {M}):= \mbox{Hom} _\mathbb{K}({\mathfrak g}^{\otimes n}, {M}).$ Define
$$\delta^n : CL^n({\mathfrak g}; {M})\longrightarrow CL^{n+1}(\mathfrak g; M)$$ 
as follows:
\begin{equation*}
\begin{split}
&\delta^nf(x_1,\ldots,x_{n+1})\\
&:= [x_1,f(x_2,\ldots,x_{n+1})] + \sum_{i=2}^{n+1}(-1)^i[f(x_1,\ldots,\hat{x}_i,\ldots,x_{n+1}),x_i]\\
&+ \sum_{1\leq i<j\leq n+1}(-1)^{j+1}f(x_1,\ldots,x_{i-1},[x_i,x_j],x_{i+1},\ldots,\hat{x}_j,\ldots, x_{n+1}).
\end{split}
\end{equation*}
Then $(CL^*(\mathfrak g; M), \delta)$ is a cochain complex, whose cohomology is called the cohomology of the Leibniz algebra $\mathfrak g$ with coefficients in the representation $M$. We denote the $n$th cohomology by $HL^n(\mathfrak g; M)$.  Any Leibniz algebra is a represenation over itself. The  $n$th cohomology of $\mathfrak g$ with coefficients in itself is denoted by $HL^n(\mathfrak g; \mathfrak g).$

A permutation $\sigma \in S_{n}$ is called a $(p,q)$-shuffle, if $ n=p+q,~\text{and}~\sigma(1)<\sigma(2)<\cdots<\sigma(p)$, and $\sigma(p+1)<\sigma(p+2)<\cdots<\sigma(p+q)$. We denote the set of all $(p,q)$-shuffles in $S_{p+q}$ by $Sh(p,q)$.

For $\alpha \in CL^{p+1}(\mathfrak g; \mathfrak g)$ and $\beta \in CL^{q+1}(\mathfrak g; \mathfrak g)$, define $\alpha \circ \beta \in CL^{p+q+1}(\mathfrak g; \mathfrak g)$ by
\begin{equation*}
\begin{split}
&\alpha \circ \beta (x_1,\ldots,x_{p+q+1} )\\
=&~\sum_{k=1}^{p+1}(-1)^{q(k-1)}\{\sum_{\sigma \in Sh(q,p-k+1)}sgn(\sigma)\alpha(x_1,\ldots,x_{k-1},\beta(x_k,x_{\sigma(k+1)},\ldots,x_{\sigma(k+q)}),\\
&~~~~~~~~~~~~~~~~~~~~~~~~~~~~~~~~~~~~~~~~~~~~~~~~~~~~~x_{\sigma(k+q+1)},\ldots,x_{\sigma(p+q+1)}) \}.
\end{split}
\end{equation*}

It is well-known \cite{bala} that the graded cochain module 
$CL^{*}(\mathfrak g; \mathfrak g)=\bigoplus_{p} CL^p(\mathfrak g; \mathfrak g)$ equipped with the following bracket operation  
$$[\alpha,\beta]=\alpha \circ \beta + (-1)^{pq+1} \beta \circ \alpha
~~\mbox{for}~ \alpha \in CL^{p+1}(\mathfrak g; \mathfrak g)~~\mbox{and}~\beta \in CL^{q+1}(\mathfrak g; \mathfrak g)$$
 and the differential map $d$  by $d \alpha =(-1)^{|\alpha|}\delta \alpha~\mbox{for}~\alpha \in CL^{*}(\mathfrak g; \mathfrak g) $ is a differential graded Lie algebra.

\section{Compatible Leibniz algebras}\label{sec3}

In this section, we define the notion of compatible Leibniz algebras. We discuss some examples and define a representation of such algebras.

\begin{defn}\label{defn cl}
Two Leibniz algebras $(\mathfrak g, [~,~]_1)$ and $(\mathfrak g, [~,~]_2)$ over a field $\mathbb{K}$ are called compatible if for any $\lambda_1, \lambda_2 \in \mathbb{K}$, the following bilinear operation
\begin{align}\label{equ1}
[x,y]= \lambda_1[x,y]_1 + \lambda_2 [x,y]_2,
\end{align}
for all $x,y \in \mathfrak g$ defines a Leibniz algebra structure on $\mathfrak g$.
\end{defn}
If $(\mathfrak g, [~,~]_1)$ and $(\mathfrak g, [~,~]_2)$ are compatible Leibniz algebras, then we denote it by $(\mathfrak g, [~,~]_1, [~,~]_2)$.

\begin{rmk}
The condition (\ref{equ1}) that the binary operation $[~,~]$ is a Leibniz bracket is equivalent to the following condition:
\begin{align}
[x,[y,z]_1]_2 + [x, [y,z]_2]_1=[[x,y]_1,z]_2 + [[x,y]_2,z]_1 - [[x,z]_1,y]_2- [[x,z]_2,y]_1.
\end{align}
\end{rmk}
In view of the above remark, we can restate the Definition \ref{defn cl} as follows:

A compatible Leibniz algebra is a triple $(\mathfrak g, [~,~]_1, [~,~]_2)$ such that 
\begin{itemize}
\item[i.] $(\mathfrak g, [~,~]_1)$ is a Leibniz algebra.
\item[ii.] $(\mathfrak g, [~,~]_2)$ is a Leibniz algebra.
\item[iii.] $[x,[y,z]_1]_2 + [x, [y,z]_2]_1=[[x,y]_1,z]_2 + [[x,y]_2,z]_1 - [[x,z]_1,y]_2- [[x,z]_2,y]_1,$ for all $x,y,z \in \mathfrak g.$
\end{itemize}

\begin{prop}
A pair $(m_1, m_2)$ of bilinear maps on a vector space $\mathfrak{g}$ defines a compatible Leibniz algebra structure on $\mathfrak{g}$ if and only if
\begin{align*}
[m_1, m_1] = 0, ~~~ [m_1, m_2] = 0,~~~~ \text{ and } ~~~~ [m_2, m_2] = 0 .
\end{align*}
\end{prop}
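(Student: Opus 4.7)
The plan is to interpret the three conditions~$[m_i,m_j]=0$ through the graded Lie bracket on $CL^{*}(\mathfrak g;\mathfrak g)$ recalled in Section~\ref{sec2}, and to match each one with a clause in the restatement of Definition~\ref{defn cl}. First I would invoke the Balavoine characterization of Leibniz brackets: a single bilinear operation $m\in CL^{2}(\mathfrak g;\mathfrak g)$ satisfies the Leibniz identity if and only if $[m,m]=0$ in the graded Lie algebra structure on $CL^{*}(\mathfrak g;\mathfrak g)$. Applying this fact to $m_1$ and $m_2$ separately translates the conditions $[m_1,m_1]=0$ and $[m_2,m_2]=0$ into clauses (i) and (ii) of the restated definition, so the only real work is to identify the mixed condition $[m_1,m_2]=0$ with the cross-term compatibility~(iii).

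For the mixed term I would unfold the definition of $\circ$ with $p=q=1$, so that $p+q+1=3$ and the outer sum ranges over $k=1,2$. For $k=1$ the shuffle set $Sh(1,1)$ contributes the identity and the transposition of the last two entries, producing $m_1(m_2(x_1,x_2),x_3)-m_1(m_2(x_1,x_3),x_2)$; for $k=2$ the sign $(-1)^{q(k-1)}=-1$ and only the trivial shuffle occurs, giving $-m_1(x_1,m_2(x_2,x_3))$. An identical computation gives $m_2\circ m_1$. Since $(-1)^{pq+1}=1$, the graded bracket is $[m_1,m_2]=m_1\circ m_2+m_2\circ m_1$, and after collecting terms I would observe that $[m_1,m_2](x_1,x_2,x_3)=0$ is precisely clause~(iii) of the restated Definition~\ref{defn cl}, written in the notation $[x,y]_i=m_i(x,y)$.

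Putting the two ingredients together yields both directions simultaneously: $(m_1,m_2)$ defines a compatible Leibniz algebra structure iff $m_1$ and $m_2$ are Leibniz brackets and satisfy the cross-identity, iff $[m_1,m_1]=[m_2,m_2]=[m_1,m_2]=0$. No step is genuinely hard; the only place care is required is the shuffle bookkeeping in the expansion of $m_1\circ m_2$ and $m_2\circ m_1$, where one must make sure the two contributions add (not cancel) because of the sign $(-1)^{pq+1}=+1$ when $p=q=1$, so that the resulting expression matches the symmetric left-hand/right-hand form of the mixed Leibniz identity rather than a skew combination.
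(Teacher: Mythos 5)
Your proposal is correct and follows essentially the same route as the paper: invoke the Maurer--Cartan characterization $[m_i,m_i]=0$ for the two Leibniz identities, then expand $m_1\circ m_2+m_2\circ m_1$ with $p=q=1$ to identify $[m_1,m_2]=0$ with the cross-compatibility clause (iii). Your shuffle bookkeeping actually adheres more literally to the definition of $\circ$ from Section~\ref{sec2} (the paper's displayed expansion of $m_1\circ m_2$ carries the opposite overall sign), but since the condition is an equation set to zero this discrepancy is immaterial and both arguments are sound.
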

\begin{proof}
Using the Maurer-Cartan characterization, it is well-known that $m_1$ and $m_2$ defines Leibniz algebras on $\mathfrak{g}$ if and only if $[m_1,m_1]=0,~[m_2,m_2]=0$ respectively.
Note that 
$$(m_1\circ m_2)(x_1, x_2, x_3) = m_1(x_1, m_2(x_2, x_3))- m_1(m_2(x_1, x_2), x_3) + m_1(m_2(x_1, x_3), x_2).$$
This implies
\begin{align*}
&[m_1, m_2] \\
&= m_1\circ m_2 +m_2\circ m_1\\
&= m_1(x_1, m_2(x_2, x_3)) + m_2(x_1, m_1(x_2, x_3)) + m_1(m_2(x_1, x_3), x_2) + m_2(m_1(x_1, x_3), x_2) \\
&- m_1(m_2(x_1, x_2), x_3) - m_2(m_1(x_1, x_2), x_3).
\end{align*}
Therefore, $(\mathfrak{g}, m_1, m_2)$ is a compatible Leibniz algebra if and only if 
$$[m_1,m_1]=0,~[m_1, m_2]=0,~[m_2, m_2]=0.$$
\end{proof}

\begin{exam}
A Leibniz algebra $(\mathfrak{g}, [~,~])$ is called abelian if $[x,y]=0$, for all $x,y\in \mathfrak{g}$. Any Leibniz algebra $\mathfrak{g}$ is compatible with abelian Leibniz algebra.
\end{exam}

\begin{exam}
Let $\mathfrak{g}$ be a three dimensional vector space over $\mathbb{K}$ with a basis $\lbrace e_1, e_2, e_3 \rbrace$. Consider two Leibniz algebras $(\mathfrak{g}, [~,~]_1)$ and $(\mathfrak{g}, [~,~]_2)$ with non-zero brackets on the basis elements defined as
\begin{align*}
&[e_1,e_1]_1=e_3;\\
& [e_1,e_1]_2=e_2,~[e_2,e_1]_2=e_3.
\end{align*}
One can easily check that those two Leibniz algebras are compatible to each other.
\end{exam}

\begin{nonexam}
Let $\mathfrak{g}$ be a three dimensional vector space with a basis $\lbrace e_1, e_2, e_3 \rbrace$. Consider two Leibniz algebras $(\mathfrak{g}, [~,~]_1)$ and $(\mathfrak{g}, [~,~]_2)$ with non-zero brackets on the basis elements defined as
\begin{align*}
&[e_1,e_2]_1=e_3,~[e_2,e_1]_1=-e_3;\\
& [e_1,e_1]_2=e_2,~[e_2,e_1]_2=e_3.
\end{align*}
Now consider the bracket 
$$[x,y]= [x,y]_1 + [x,y]_2.$$
With respect to the above bracket, we have the following non-zero brackets on the basis elements
\begin{align*}
&[e_1,e_1]=e_2, ~[e_2,e_2]=e_3,~[e_1,e_2]=e_3.
\end{align*}
Observe that if $(\mathfrak{g}, [~,~])$ is a Leibniz algebra, then we have
$$[e_1,[e_1,e_1]]= [[e_1,e_1],e_1]-[[e_1,e_1],e_1].$$
This implies $e_3=0$, which is absurd.
\end{nonexam}

\begin{defn}
Let $(\mathfrak{g}, [~,~]_1, [~,~]_2)$ and $(\mathfrak{g}^{'}, [~,~]^{'}_1, [~,~]^{'}_2)$ be two compatible Leibniz algebras. A morphism between compatible Leibniz algebras $\mathfrak{g}$ and $\mathfrak{g}^{'}$ is a linear map $f: \mathfrak{g} \to \mathfrak{g}^{'}$ such that
\begin{align}
& f\circ [~,~]_1 = [~,~]^{'}_1 \circ (f\otimes f);\\
& f\circ [~,~]_2 = [~,~]^{'}_2 \circ (f\otimes f).
\end{align}
\end{defn}

\begin{defn}
Let $(\mathfrak{g}, [~,~]_1, [~,~]_2)$ be a compatible Leibniz algebra. A compatible $\mathfrak{g}$-bimodule is a quintuple $(M, l_1, r_1, l_2, r_2)$, where $M$ is a vector space and 
\begin{align}
\begin{cases}
l_1 : \mathfrak{g}\otimes M \to M,\\
r_1: M\otimes \mathfrak{g} \to M,
\end{cases}
;
\begin{cases}
l_2 : \mathfrak{g}\otimes M \to M,\\
r_2: M\otimes \mathfrak{g} \to M,
\end{cases}
\end{align}
are bilinear maps satisfying:
\begin{itemize}
\item[i.] $(M, l_1, r_1)$ is a bimodule over $(\mathfrak{g}, [~,~]_1)$;
\item[ii.] $(M, l_2, r_2)$ is a bimodule over $(\mathfrak{g}, [~,~]_2)$;
\item[iii.] For all $x,y \in \mathfrak{g}$, and $m\in M$, the following compatibility conditions hold:
\begin{itemize}
\item[(a)] $r_1(x, l_2(y,m) +r_2 (x, l_1(y, m))= l_1([x,y]_2, m) + l_2([x,y]_1, m)- r_1(l_2(x,m),y)-r_2(l_1(x,m),y)$;
\item[(b)] $l_1(x, r_2(m,y))+ l_2(x, r_1(m,y)) = r_1(l_2(x,m),y)+r_1(l_2(x,m),y)-l_1([x,y]_2,m)-l_2([x,y]_1,m)$;
\item[(c)] $r_1(m, [x,y]_2) + r_2(m, [x,y]_1)= r_1(r_2(m,x), y) + r_2(r_1(m,x), y)- r_1(r_2(m,y),x)- r_2(r_1(m,y),x).$
\end{itemize}
\end{itemize}
\end{defn}

\begin{exam}
Any compatible Leibniz algebra $(\mathfrak{g}, [~,~]_1, [~,~]_2)$ is a compatible bimodule over itself by considering $l_1=r_1=[~,~]_1$ and $l_2=r_2=[~,~]_2$.
\end{exam}

\begin{exam}
If $(\mathfrak{g}, [~,~]_1, [~,~]_2)$ is a compatible Leibniz algebra. Then we know that $(\mathfrak{g}, \lambda_1[~,~]_1 + \lambda_2[~,~]_2)$ is also a Leibniz algebra. Let  $(M, l_1, r_1, l_2, r_2)$ be a compatible $\mathfrak{g}$-bimodule. Then it is a routine work to check that $(M, l_1+l_2, r_1+r_2)$ is a bimodule over the Leibniz algebra $(\mathfrak{g}, \lambda_1[~,~]_1 + \lambda_2[~,~]_2)$.
\end{exam}

\begin{prop}\label{semi}
Let $(\mathfrak{g}, [~,~]_1, [~,~]_2)$ be a compatible Leibniz algebra and $(M, l_1, r_1, l_2, r_2)$ be a compatible $\mathfrak{g}$-bimodule. Then the direct sum $\mathfrak{g}\oplus M$ has a compatible Leibniz algebra structure with the following binary operations:
\begin{align*}
&[(x,m), (y,n)]^{1} = ([x,y]_1, l_1(x,n) +r_1(m,y));\\
& [(x,m), (y,n)]^{2} = ([x,y]_2, l_2(x,n) +r_2(m,y)),
\end{align*}
for all $(x,m), (y,n) \in \mathfrak{g} \oplus M$. This structure is called the semi-direct product.
\end{prop}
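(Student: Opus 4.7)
My plan is to verify the three conditions in the reformulation of Definition~\ref{defn cl} given after the Remark: namely, that $[~,~]^1$ is a Leibniz bracket on $\mathfrak{g}\oplus M$, that $[~,~]^2$ is a Leibniz bracket on $\mathfrak{g}\oplus M$, and that the two brackets satisfy the compatibility identity
\[
[X,[Y,Z]^1]^2 + [X,[Y,Z]^2]^1 = [[X,Y]^1,Z]^2 + [[X,Y]^2,Z]^1 - [[X,Z]^1,Y]^2 - [[X,Z]^2,Y]^1
\]
for all $X=(x,m),Y=(y,n),Z=(z,p)\in\mathfrak{g}\oplus M$.

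For the first two conditions I would simply invoke the classical fact that if $(\mathfrak{g},[~,~]_i)$ is a Leibniz algebra and $(M,l_i,r_i)$ is a bimodule over it, then the formula $[(x,m),(y,n)]^i=([x,y]_i,\,l_i(x,n)+r_i(m,y))$ defines a Leibniz bracket on $\mathfrak{g}\oplus M$; this is built into the definition of a Leibniz bimodule. Hence items (i) and (ii) of the compatible Leibniz algebra axioms are automatic from conditions (i) and (ii) in the definition of a compatible bimodule.

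The real content, and the expected main obstacle, is condition (iii). I would expand both sides of the identity above component by component. The first component sits inside $\mathfrak{g}$ and equals the compatibility relation for $(\mathfrak{g},[~,~]_1,[~,~]_2)$ applied to $x,y,z$, so it holds by hypothesis. The second component lies in $M$, and after expansion each term $l_i$, $r_i$ appears applied to the various inputs $m,n,p$. I would sort the resulting expression into three groups according to which of $m$, $n$, $p$ is the $M$-argument. The group containing $p$ reduces exactly to the compatibility condition (c) of the bimodule, the group containing $n$ reduces to (a), and the group containing $m$ reduces to (b) (possibly after a small rearrangement of signs and sides). The main care needed is in bookkeeping: one must track which variable plays the role of the module element in each bracket of a bracket, and apply the correct one of (a), (b), (c) in each case.

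Once this expansion is done and each of the three grouped sub-identities is matched to one of (a)--(c), the compatibility axiom is verified and the proof is complete. I do not foresee any conceptual difficulty beyond the sheer length of the term-by-term check; no additional hypotheses on $M$ or $\mathfrak{g}$ are needed.
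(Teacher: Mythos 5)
Your proposal is correct, and it is the standard argument; the paper in fact states this proposition without any proof, so there is nothing to diverge from. One small point worth flagging: for the matching of the $M$-component to go through, conditions (a) and (b) of the compatible-bimodule definition must be read in their evidently intended form --- in (a) the leading terms should be $l_1(x,l_2(y,m))+l_2(x,l_1(y,m))$ rather than $r_1(x,l_2(y,m))+r_2(x,l_1(y,m))$ (the printed version does not even typecheck), and in (b) the repeated term $r_1(l_2(x,m),y)$ should be $r_1(l_2(x,m),y)+r_2(l_1(x,m),y)$. With those corrections your three groups (module argument $p$, $n$, $m$) line up exactly with (a), (b), (c) respectively, and the $\mathfrak{g}$-component is the compatibility of $[~,~]_1$ and $[~,~]_2$, as you say.
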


\subsection{Low-dimensional compatible Leibniz algebras}
In this subsection, we explore the classification of complex  Leibniz algebras in dimension 2 and 3 to provide all compatible pairs.  We refer for the classifications to \cite{Rakhimov}. 
\subsubsection{2-Dimensional compatible Leibniz algebras}
There are three unabelain non-isomorphic 2-dimensional Leibniz algebras.   They are given with respect to basis $\{ e_1,e_2\}$ by 
\begin{enumerate}
\item[$L_1 :$] $ [e_1,e_2]=e_2,\; [e_2,e_1]=-e_2$ (solvable Lie algebra);
\item[$L_2 :$] $ [e_1,e_1]=e_2$ (nilpotent  Leibniz algebra);
\item[$ L_3 :$] $ [e_1,e_1]=e_2,\; [e_2,e_1]=e_2$ (solvable Leibniz algebra).
\end{enumerate}
\begin{prop}
There is, up to isomorphism,  only one   pair of 2-dimensional compatible Leibniz algebras. It is given by $(L_2,L_3 )$.

\end{prop}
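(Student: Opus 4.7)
The plan is to run through the $\binom{3}{2}+3=6$ unordered pairs of the non-abelian 2-dimensional Leibniz algebras $L_1,L_2,L_3$ and decide, for each pair $(L_i,L_j)$, whether there exist brackets $[\,,\,]_1,[\,,\,]_2$ on a 2-dimensional vector space $\mathfrak g$ such that $(\mathfrak g,[\,,\,]_k)\cong L_{i_k}$ and the pair is compatible in the sense of condition (iii) following Definition \ref{defn cl}. Because the classification of Leibniz algebras is given up to linear isomorphism, we may normalise $[\,,\,]_1$ to the displayed canonical form for $L_i$ on the basis $\{e_1,e_2\}$; the residual freedom to change $[\,,\,]_2$ is the action of the stabiliser $\mathrm{Aut}(L_i)\subset\mathrm{GL}_2(\mathbb{C})$ of that normal form.

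First I would write $[\,,\,]_2$ with eight unknown structure constants $[e_a,e_b]_2=\alpha^1_{ab}e_1+\alpha^2_{ab}e_2$ and impose two sets of conditions: (1) the Leibniz identity for $[\,,\,]_2$ alone, which cuts out the $\mathrm{GL}_2$-variety of 2-dimensional Leibniz brackets (a small explicit variety), and (2) the bilinear compatibility identity (iii), which is linear in the $\alpha$'s once $[\,,\,]_1$ is fixed. I would then describe the solution set and quotient by $\mathrm{Aut}(L_i)$ to read off isomorphism classes of compatible pairs. For the specific pair $(L_2,L_3)$, since $L_2$ has only a single non-zero structure constant, condition (iii) collapses to a very small system and one checks directly that the canonical brackets
\begin{align*}
[e_1,e_1]_1=e_2,\qquad [e_1,e_1]_2=e_2,\ \ [e_2,e_1]_2=e_2
\end{align*}
satisfy it; indeed all 24 instances of (iii) reduce to identities among $e_2$ and $0$.

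For the remaining pairs I would argue elimination. In $(L_1,L_1)$ the linear system forces $[\,,\,]_2$ to be a scalar multiple of $[\,,\,]_1$ (so the pair is isomorphic to the trivial deformation, not a genuinely new compatible pair). In $(L_1,L_2)$ and $(L_1,L_3)$, the presence of the skew-symmetric bracket $[e_1,e_2]_1=-[e_2,e_1]_1=e_2$ forces the square $[e_i,e_i]_2$ terms characteristic of $L_2,L_3$ to vanish after substitution into (iii), contradicting $[\,,\,]_2\cong L_j$. In $(L_2,L_2)$ and $(L_3,L_3)$, the stabiliser action is rich enough that every solution can be normalised into a scalar multiple of $[\,,\,]_1$, again giving no new isomorphism class. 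In each case the pair is therefore either isomorphic to a compatibility with an abelian/proportional bracket (excluded as a trivial pair) or absent.

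The main obstacle is the bookkeeping in the elimination step: one has to verify, after solving the linear compatibility system, that the residual action of $\mathrm{Aut}(L_i)$ really normalises every allowed $[\,,\,]_2$ into a proportional or abelian bracket for the five excluded pairs. The stabiliser of $L_1$ (which is a two-parameter group of upper-triangular matrices) gives the richest action and the most tedious verification; I would carry out that case first and use the nilpotency of $L_2,L_3$ to shorten the other cases. Once these eliminations are in hand, combined with the explicit verification for $(L_2,L_3)$, the proposition follows.
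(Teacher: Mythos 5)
The paper states this proposition without proof, so there is no argument of the authors' to compare against; your case-by-case verification over the pairs from the Rakhimov--Atan list is the natural (indeed the only available) route. Your explicit check that the canonical representatives of $L_2$ and $L_3$ satisfy condition (iii) is correct (though there are $2^3=8$ basis instances of (iii), not $24$). The eliminations of $(L_1,L_2)$ and $(L_1,L_3)$ are only asserted in your write-up, but they do go through, and more cleanly than your sketch suggests: since the $L_1$-bracket is alternating, putting $x=y=z$ in (iii) reduces the condition to $[x,[x,x]_2]_1=0$ for all $x$, i.e.\ $[x,x]_2\in\mathbb{C}x$ for every $x$ (because $[u,w]_1=\det(u,w)e_2$ vanishes only when its arguments are dependent); but any bracket isomorphic to $L_2$ or $L_3$ has all squares $[x,x]_2$ lying in one fixed line independent of $x$ and not identically zero, a contradiction. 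So the existence/non-existence part of your plan is sound, modulo actually carrying out these computations.

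The genuine error is in your treatment of the diagonal pairs. You claim that for $(L_1,L_1)$ the compatibility system forces $[\,,\,]_2$ to be a scalar multiple of $[\,,\,]_1$. This is false: in dimension $2$ every alternating bilinear bracket has the form $\mu(u,w)=\det(u,w)\,v$ for a fixed vector $v$, the Jacobi identity is automatic, and any linear combination of two such brackets is again of this form; hence $\det(\cdot,\cdot)\,e_2$ and $\det(\cdot,\cdot)\,e_1$ are compatible, both isomorphic to $L_1$, and not proportional. The diagonal cases must instead be excluded by convention rather than by computation: every Leibniz algebra is compatible with itself via identical brackets, so if pairs $(L_i,L_i)$ were admitted the proposition would be trivially false, and the three-dimensional analogue in the paper likewise lists only pairs of distinct isomorphism classes. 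Once you restrict to distinct classes, only $(L_1,L_2)$, $(L_1,L_3)$, $(L_2,L_3)$ need examination, and your plan, with the eliminations executed as above, does prove the statement.
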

 
\subsubsection{3-Dimensional compatible Leibniz algebras}
Every non-abelian  3-dimensional Leibniz algebra is isomorphic to one of the following algebras with respect to basis a $\{ e_1,e_2, e_3\}$:
\begin{enumerate}
\item[$L_1(\alpha): $] $  [e_1,e_3] = \alpha e_1,  [e_2,e_3] = e_1 + e_2,
[e_3,e_3] = e_1,$ $\forall \alpha \in \mathbb{C}^*$ (solvable Leibniz algebra);
\item[$L_2: $] $  [e_3,e_3] =  e_1, [e_2,e_3] = e_1 + e_2,$  (solvable Leibniz algebra);
\item[$L_3: $] $  [e_1,e_2] = e_3, [e_1,e_3] = -2e_3, 
[e_2,e_1] = -e_3, [e_2,e_3] = 2e_3,
[e_3,e_1] = 2e_3, [ e_3,e_2] = -2e_3,$  (simple Lie  algebra);
\item[$L_4(\alpha): $] $  [e_1,e_3] = \alpha e_1,  [e_2,e_3] = - e_2, [e_3,e_2] = e_2, 
[e_3,e_3] = e_1,$ $\forall \alpha \in \mathbb{C}$ (solvable Leibniz algebra);
\item[$L_5: $] $  [e_1,e_3] =  e_1, [e_2,e_3] = e_1 ,[e_3,e_3] = e_1,$  (solvable Leibniz algebra);
\item[$L_6: $] $  [e_1,e_3] =  e_2, [e_3,e_3] = e_1,$  (nilpotent Leibniz algebra);
\item[$L_7: $] $  [e_1,e_2] =  e_1, [e_1,e_3] = e_1, [e_3,e_2] =  e_1, [e_3,e_3] = e_1,$  (solvable Leibniz algebra);
\item[$L_8: $] $  [e_1,e_1] =  e_2, [e_2,e_1] = e_2,$  (solvable Leibniz algebra);
\item[$L_9(\alpha): $] $  [e_1,e_2] =e_2,  [e_1,e_3] = \alpha e_3, [e_2,e_1] =- e_2, 
[e_3,e_1] =-\alpha  e_3,$ $\forall \alpha \in \mathbb{C}-\{0,1\}$ (solvable Lie algebra);
\item[$L_{10}: $] $  [e_1,e_2] =  e_2, [e_2,e_1] =- e_2,$  (solvable Lie algebra);
\item[$L_{11}: $] $  [e_1,e_2] =  e_2, [e_1,e_3] = e_2+e_3, [e_2,e_1] = - e_2, [e_3,e_1] =- e_2-e_3,$  (solvable Lie algebra);
\item[$L_{12}(\alpha): $] $  [e_2,e_2] =  e_1,  [e_2,e_3] =  e_1, 
[e_3,e_3] = \alpha e_1,$ $\forall \alpha \in \mathbb{C}$ (nilpotent Leibniz algebra);
\item[$L_{13}: $] $  [e_2,e_2] =  e_1, [e_2,e_3] = e_1 ,[e_3,e_2] = e_1,$  (associative commutative nilpotent  Leibniz algebra);
\item[$L_{14}: $] $  [e_1,e_3] =  e_1, [e_2,e_3] = e_2 ,[e_3,e_3] = e_1,$  (solvable Leibniz algebra);
\item[$L_{15}: $] $  [e_1,e_1] =  e_2, $  (associative commutative nilpotent  Leibniz algebra);
\item[$L_{16}: $] $  [e_1,e_2] =  e_2, [e_1,e_3] = e_3, [e_2,e_1] = - e_2, [e_3,e_1] =-e_3,$  (solvable Lie algebra);
\item[$L_{17}: $] $  [e_1,e_2] =  e_3,  [e_2,e_1] = - e_3, $  (nilpotent Lie algebra).
\end{enumerate}
\begin{prop}The 3-dimensional compatible Leibniz algebras are given by the pairs:
\begin{eqnarray*}
& (L_1,L_2 ),\; (L_1,L_5 ),\; (L_1,L_6 ),\; (L_1,L_{14} ),\; (L_2,L_5 ),\;  (L_2,L_6 ),\; (L_2,L_{14} ),\;\\
& (L_4,L_{13} ) \text{ for } \alpha=-2,\; (L_5,L_{6} ),\;(L_5,L_{14} ),\;(L_6,L_{14} ),\; (L_7,L_{12} ) \text{ for } \alpha=0,\;\\
& (L_8,L_{15} ),\;(L_9,L_{10} ),\;(L_9,L_{11} ),\;(L_9,L_{16} ),\;(L_9,L_{17} ),\;
(L_{10},L_{11} ),\;(L_{10},L_{16} ),\;\\ &(L_{11},L_{16} ),\;(L_{11},L_{17} ),\;(L_{12},L_{13} ),\;(L_{16},L_{17} ).
\end{eqnarray*}
\end{prop}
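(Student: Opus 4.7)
The plan is to carry out an exhaustive case analysis across all unordered pairs of algebras drawn from the list of seventeen normal forms, checking in each case whether there exists a simultaneous choice of basis making the two brackets satisfy the compatibility axiom from Definition~\ref{defn cl}. Throughout, I would fix the underlying vector space as $\mathbb{C}^3$ with basis $\{e_1,e_2,e_3\}$ and interpret the classification as a list of 17 normal forms of Leibniz brackets on this fixed basis. For an ordered pair $(L_i,L_j)$, the third compatibility condition rewrites as a finite system of polynomial equations: evaluating
$$[x,[y,z]_1]_2+[x,[y,z]_2]_1 = [[x,y]_1,z]_2+[[x,y]_2,z]_1-[[x,z]_1,y]_2-[[x,z]_2,y]_1$$
on the $27$ triples of basis vectors yields at most $81$ scalar equations in the structure constants of the second bracket (the first being held fixed).

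Second, since we classify compatible structures up to simultaneous isomorphism, I would fix $L_i$ in its standard form and let the second bracket range over the $GL_3(\mathbb{C})$-orbit of the standard form of $L_j$. Equivalently, I seek a change-of-basis matrix $P\in GL_3(\mathbb{C})$ such that the $P$-conjugate of the tabulated bracket of $L_j$ is compatible with the tabulated bracket of $L_i$. The stabilizer of this action is $\mathrm{Aut}(L_i)$, and the action of $\mathrm{Aut}(L_j)$ on the target then normalizes the answer; so it is enough to solve the polynomial system for $P$ modulo these two automorphism groups. As a consistency check, I would reformulate the condition in the language of Proposition~3.2 (namely $[m_1,m_2]=0$ in the graded Lie algebra $CL^*(\mathfrak g;\mathfrak g)$), which offers a coordinate-free perspective but reduces in practice to the same polynomial system.

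Third, I would execute the check algorithm: for each of the 17 algebras compute $\mathrm{Aut}$ explicitly (these are recorded in \cite{Rakhimov}), and for each of the $\binom{17}{2}$ unordered pairs attempt to solve the resulting system. Most pairs turn out inconsistent and can often be eliminated quickly by inspecting a single nonzero triple; for the pairs that do admit solutions one produces an explicit $P$, and the listed twenty-three pairs emerge. The main obstacles are, first, the continuous parameter families $L_1(\alpha),L_4(\alpha),L_9(\alpha),L_{12}(\alpha)$, where compatibility typically singles out a discrete set of values of $\alpha$ (explaining why the statement pins $\alpha=-2$ for the pair $(L_4,L_{13})$ and $\alpha=0$ for $(L_7,L_{12})$); and, second, the sheer bookkeeping, which would realistically be done with computer algebra to avoid errors.

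Finally, to confirm that the listed pairs are pairwise non-isomorphic as \emph{compatible} Leibniz algebras, one observes that the pair of underlying individual Leibniz algebra structures $(L_i,L_j)$ is an invariant of the isomorphism class of the compatible algebra. Hence distinct pairs on the list cannot be isomorphic, and the $23$ pairs given constitute the complete list of $3$-dimensional compatible Leibniz algebras up to isomorphism.
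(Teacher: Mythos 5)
The paper offers no proof of this proposition at all: the list of compatible pairs is asserted immediately after the table of normal forms, with only the classification of the individual algebras cited from the literature. So there is no argument in the paper to compare yours against, and your proposal is at least an honest account of what a proof would have to consist of. That said, what you have written is a plan rather than a proof: the entire mathematical content of the statement is the outcome of the finite verification, and you perform none of it --- not a single pair is actually shown to be compatible, and not a single excluded pair is actually ruled out. ``Most pairs turn out inconsistent and the listed twenty-three pairs emerge'' cannot stand in for the computation; at minimum you would need to exhibit the check for a representative compatible pair and a representative incompatible pair, and state precisely which part was delegated to computer algebra and how it was organized.

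Two further points deserve attention. First, you quietly change the statement being proved: by letting the second bracket range over the full $GL_3(\mathbb{C})$-orbit of its normal form you are classifying compatible structures up to simultaneous isomorphism, whereas the paper's list reads most naturally as the pairs of \emph{tabulated} brackets satisfying the compatibility identity on the nose (consistent with the way the specific parameter values $\alpha=-2$ and $\alpha=0$ are pinned to the pairs $(L_4,L_{13})$ and $(L_7,L_{12})$). These are different problems with potentially different answers --- the orbit version can only enlarge the list --- so you must decide which one you are solving and verify the list for that version. Second, your closing uniqueness argument only shows that \emph{distinct} pairs $(L_i,L_j)$ yield non-isomorphic compatible algebras; it does not rule out a single pair supporting several inequivalent compatible structures, i.e.\ several $\mathrm{Aut}(L_i)\times\mathrm{Aut}(L_j)$-orbits of admissible change-of-basis matrices $P$. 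Hence the conclusion that the twenty-three pairs ``constitute the complete list of $3$-dimensional compatible Leibniz algebras up to isomorphism'' is not justified by what precedes it; what your argument could deliver, once executed, is that these are exactly the pairs of isomorphism classes admitting at least one compatible structure.
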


\section{Maurer-Cartan characterization of compatible Leibniz algebras}\label{sec5}
In this Section, we characterize compatible Leibniz algebras as  Maurer-Cartan elements of a suitable graded Lie algebra.

Let $(\mathfrak{g} = \oplus_n \mathfrak{g}^n, [~,~])$ be a graded Lie algebra. A Maurer-Cartan element of $\mathfrak{g}$ is an element $\alpha \in \mathfrak{g}^1$ such that
\begin{align*}
[\alpha, \alpha] = 0.
\end{align*}

It is well-known that if $\alpha$ is a Maurer-Cartan element then we get a degree $1$ coboundary map $d_\alpha : =[ \alpha , - ]$ on $\mathfrak{g}$. Therefore, we get a differential graded Lie algebra $(\mathfrak{g}, [~,~],d_\alpha)$. For any $\alpha' \in \mathfrak{g}^1$, the sum $\alpha+ \alpha'$ is a Maurer-Cartan element of $\mathfrak{g}$ if and only if $\alpha'$ satisfies
\begin{align*}
&[\alpha+\alpha', \alpha+\alpha']=0\\
&[\alpha,\alpha] + [\alpha, \alpha'] + [\alpha', \alpha] + [\alpha', \alpha']=0\\
&2[\alpha, \alpha'] + [\alpha', \alpha']=0\\
&d_\alpha (\alpha') + \frac{1}{2} [\alpha', \alpha' ] = 0.
\end{align*}

\begin{defn}
Two Maurer-Cartan elements $\alpha_1$ and $\alpha_2$ are said to be compatible if they satisfy  $[\alpha_1, \alpha_2] = 0$. In this case, we say that $(\alpha_1, \alpha_2)$ is a compatible pair of Maurer-Cartan elements of $\mathfrak{g}$.
\end{defn}

We define $\mathfrak{g}_{com} = \oplus_{n \geq 0} (\mathfrak{g}_{com})^n$, where
\begin{align*}
(\mathfrak{g}_{com})^0 = \mathfrak{g}^0 ~~~~ \text{ and } ~~~~ (\mathfrak{g}_{com})^n = \underbrace{\mathfrak{g}^n \oplus \cdots \oplus \mathfrak{g}^n}_{(n+1) \text{ times}}, ~\text{ for } n \geq 1.
\end{align*}
Let $[ ~,~ ]_c : (\mathfrak{g}_{com})^m \times (\mathfrak{g}_{com})^n \rightarrow (\mathfrak{g}_{com})^{m+n}$, for $m,n \geq 0$, be the degree $0$ bracket defined by
\begin{align}\label{br}
 &[(h_1, \ldots, h_{m+1}), (k_1, \ldots, k_{n+1}) ]_c :=  \\
&= \big( [h_1, k_1],~ [h_1, k_2] + [h_2, k_1],~ \ldots, \underbrace{[h_1, k_i] + [h_2, k_{i-1}] + \cdots + [h_i, k_1]}_{i\text{-th place}},~ \ldots, [h_{m+1}, k_{n+1}] \big), \nonumber
\end{align}
for $(h_1, \ldots, h_{m+1}) \in (\mathfrak{g}_{com})^m$ and $(k_1, \ldots, k_{n+1}) \in (\mathfrak{g}_{com})^n.$

\begin{prop}
\begin{itemize}
\item[(i)] $(\mathfrak{g}_{com}, [ ~, ~ ]_c)$ is a graded Lie algebra. Moreover, the map $\psi : \mathfrak{g}_{com} \rightarrow \mathfrak{g}$ defined by
\begin{align*}
\psi (h ) =~& h, ~ \text{ for } h \in (\mathfrak{g}_{com})^0 = \mathfrak{g}^0,\\
\psi ((h_1, \ldots, h_{n+1})) =~& h_1 + \cdots + h_{n+1},~ \text{ for } (h_1, \ldots, h_{n+1}) \in (\mathfrak{g}_{com})^n
\end{align*}
is a morphism of graded Lie algebras.
\item[(ii)] A pair $(\alpha_1, \alpha_2)$ of elements of $\mathfrak{g}^1$ is a compatible pair of Maurer-Cartan elements of $\mathfrak{g}$ if and only if $(\alpha_1, \alpha_2) \in (\mathfrak{g}_{com})^1 = \mathfrak{g}^1 \oplus \mathfrak{g}^1$ is a Maurer-Cartan element in the graded Lie algebra $(\mathfrak{g}_{com}, [ ~, ~ ]_c).$
\end{itemize}
\end{prop}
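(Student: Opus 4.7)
The plan is to realize $\mathfrak{g}_{com}$ as a graded Lie subalgebra of the polynomial extension of $\mathfrak{g}$, so that graded antisymmetry, the graded Jacobi identity, and the morphism property all reduce to transparent statements about polynomial multiplication rather than requiring a direct combinatorial manipulation of \eqref{br}.

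First, I would introduce $\widetilde{\mathfrak{g}} := \bigoplus_n \mathfrak{g}^n[\lambda]$, graded by $\widetilde{\mathfrak{g}}^n = \mathfrak{g}^n[\lambda]$, equipped with the $\mathbb{K}[\lambda]$-bilinear extension of $[~,~]$. Since $\mathfrak{g}$ is a graded Lie algebra and $\lambda$ commutes with everything, $\widetilde{\mathfrak{g}}$ is also a graded Lie algebra. I would then identify a tuple $(h_1,\ldots,h_{n+1}) \in (\mathfrak{g}_{com})^n$ with the polynomial $\sum_{j=1}^{n+1} h_j \lambda^{j-1}$ of degree at most $n$. A short expansion shows that the coefficient of $\lambda^{i-1}$ in $\bigl(\sum_j h_j \lambda^{j-1}\bigr)\bigl(\sum_l k_l \lambda^{l-1}\bigr)$ inside $\widetilde{\mathfrak{g}}$ is exactly $\sum_{j+l=i+1}[h_j, k_l]$, matching the $i$-th slot of \eqref{br}. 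Since the product of polynomials of degrees $\leq m$ and $\leq n$ has degree $\leq m+n$, the graded subspace $\mathfrak{g}_{com} \subset \widetilde{\mathfrak{g}}$ is closed under the bracket, hence inherits the structure of a graded Lie algebra. For the morphism claim in (i), the map $\psi$ is the restriction of the evaluation map $\mathrm{ev}_{\lambda=1} : \widetilde{\mathfrak{g}} \to \mathfrak{g}$, which is tautologically a graded Lie algebra morphism because the bracket on $\widetilde{\mathfrak{g}}$ was defined by $\mathbb{K}[\lambda]$-bilinear extension.

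For part (ii), I would expand $[\alpha,\alpha]_c$ for $\alpha = (\alpha_1, \alpha_2) \in (\mathfrak{g}_{com})^1 = \mathfrak{g}^1 \oplus \mathfrak{g}^1$ directly using \eqref{br}. Its three components in $(\mathfrak{g}_{com})^2$ read $[\alpha_1,\alpha_1]$, $[\alpha_1,\alpha_2]+[\alpha_2,\alpha_1]$, and $[\alpha_2,\alpha_2]$. Graded antisymmetry of $[~,~]$ with $|\alpha_1|=|\alpha_2|=1$ yields $[\alpha_2,\alpha_1] = [\alpha_1,\alpha_2]$, so the middle component simplifies to $2[\alpha_1,\alpha_2]$. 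Hence $[\alpha,\alpha]_c = 0$ if and only if $[\alpha_1,\alpha_1] = [\alpha_2,\alpha_2] = 0$ and $[\alpha_1,\alpha_2] = 0$, which is exactly the definition of a compatible pair of Maurer-Cartan elements.

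The main obstacle is establishing the polynomial identification cleanly, i.e., verifying that the coefficient expansion of the polynomial product in $\widetilde{\mathfrak{g}}$ really does reproduce \eqref{br} and that the filtration by polynomial degree is compatible with the bracket. Once these bookkeeping steps are in place, every remaining claim in the proposition collapses to an elementary observation. A brute-force proof of the graded Jacobi identity directly from \eqref{br} is feasible but combinatorially heavier and offers no extra insight.
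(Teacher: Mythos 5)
Your proof is correct, and part (i) takes a genuinely different route from the paper. The paper verifies the graded Jacobi identity for $[~,~]_c$ by brute force: it expands the nested bracket, observes that the $i$-th slot is the convolution sum $\sum_{p+q+r=i+2}[h_p,[k_q,l_r]]$, and applies the Jacobi identity of $\mathfrak{g}$ to each summand; the morphism property of $\psi$ is likewise checked by re-summing the convolution. You instead embed $(\mathfrak{g}_{com})^n$ into $\mathfrak{g}^n[\lambda]$ as the polynomials of degree at most $n$, note that the $i$-th slot of the bracket (\ref{br}) is precisely the coefficient of $\lambda^{i-1}$ in the $\mathbb{K}[\lambda]$-bilinear extension of $[~,~]$, and that the degree bound is preserved under multiplication, so $\mathfrak{g}_{com}$ is a graded Lie subalgebra of $\mathfrak{g}[\lambda]$ and $\psi$ is the restriction of evaluation at $\lambda=1$. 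Your approach buys conceptual clarity and eliminates the index bookkeeping (both the Jacobi identity and the morphism claim become instances of base change along $\mathbb{K}\to\mathbb{K}[\lambda]$ followed by a ring homomorphism $\mathbb{K}[\lambda]\to\mathbb{K}$); the paper's computation buys self-containedness, avoiding the auxiliary object and the need to verify the coefficient identification and the degree filtration. For part (ii) your argument coincides with the paper's: both reduce $[(\alpha_1,\alpha_2),(\alpha_1,\alpha_2)]_c$ to $([\alpha_1,\alpha_1],\,2[\alpha_1,\alpha_2],\,[\alpha_2,\alpha_2])$, and both implicitly use that the characteristic is not $2$ to pass from $2[\alpha_1,\alpha_2]=0$ to $[\alpha_1,\alpha_2]=0$.
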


\begin{proof}
(i) For $(h_1, \ldots, h_{m+1}) \in (\mathfrak{g}_{com})^m$, $(k_1, \ldots, k_{n+1}) \in (\mathfrak{g}_{com})^n$ and $(l_1, \ldots, l_{p+1}) \in (\mathfrak{g}_{com})^p$,
\begin{align*}
&[ (h_1, \ldots, h_{m+1}), [ (k_1, \ldots, k_{n+1}) ,  (l_1, \ldots, l_{p+1})]_c ]_c \\
&= [ (h_1, \ldots, h_{m+1}) , \big( [k_1, l_1], \ldots, \underbrace{\sum_{q+r = i+1}  [k_q, l_r]}_{i\text{-th place}}, \ldots, [k_{n+1}, l_{p+1}] \big) ]_c \\
&= \big( [h_1, [k_1, l_1]], \ldots, \underbrace{ \sum_{p+q+r= i+2} [h_p, [k_q, l_r]]}_{i\text{-th place}}, \ldots, [h_{m+1}, [k_{n+1}, l_{p+1}]]      \big)\\
&= \bigg( [[h_1, k_1], l_1] + (-1)^{mn} ~ [k_1, [h_1, l_1]]~, \ldots, \underbrace{\sum_{p+q+r = i+2} [[h_p, k_q], l_r] + (-1)^{mn} ~[k_q, [h_p, l_r]]}_{i\text{-th place}}, \\
& \qquad \qquad \qquad \ldots,  [[h_{m+1}, k_{n+1}], l_{p+1}] + (-1)^{mn} ~ [k_{n+1}, [h_{m+1}, l_{p+1}]]    \bigg) \\
&= [ [ (h_1, \ldots, h_{m+1}), (k_1, \ldots, k_{n+1})]_c, (l_1, \ldots, l_{p+1})]_c  \\
& \qquad \qquad \qquad + (-1)^{mn}~ [  (k_1, \ldots, k_{n+1}), [ (h_1, \ldots, h_{m+1}), (l_1, \ldots, l_{n+1}) ]_c ]_c.
\end{align*}
 We also have
\begin{align*}
\psi [ (h_1, \ldots, h_{m+1}), (k_1, \ldots, k_{n+1})]_c =~& \sum_{i=1}^{m+n+1} \sum_{q+r = i+1} [h_q, k_r] \\
=~&[h_1 + \cdots + h_{m+1}, ~k_1 + \cdots + k_{n+1}] \\
=~& [\psi (h_1, \ldots, h_{m+1}), \psi (k_1, \ldots, k_{n+1}) ],
\end{align*}
which completes the second part.

(ii) For a pair $(\alpha_1, \alpha_2)$ of elements of $\mathfrak{g}^1$, we have
\begin{align*}
[ (\alpha_1, \alpha_2), (\alpha_1, \alpha_2)]_c = ( [\alpha_1, \alpha_1], [\alpha_1, \alpha_2] + [\alpha_2, \alpha_1], [\alpha_2, \alpha_2]) = ([\alpha_1, \alpha_1], 2[\alpha_1, \alpha_2], [\alpha_2, \alpha_2]).
\end{align*}
Therefore, $(\alpha_1, \alpha_2) \in (\mathfrak{g}_{com})^1$ is a Maurer-Cartan element in $\mathfrak{g}_{com}$ if and only if $(\alpha_1, \alpha_2)$ is a pair of compatible Maurer-Cartan elements in $\mathfrak{g}$.
\end{proof}

Thus, from the graded Lie bracket (defined in Section \ref{sec2}) and the above proposition, we get the following.

\begin{thm}
Let $L$ be a vector space.
\begin{itemize}
\item[(i)] Then $C^{\ast + 1}_{com} (L, L) := \oplus_{n \geq 0} C^{n+1}_{com} (L,L)$, where
\begin{align*}
&C^1_{com} (L,L) = C^1(L,L);\\
&C^{n+1}_{com} (L,L) = \underbrace{C^{n+1} (L,L) \oplus \cdots \oplus C^{n+1}(L,L)}_{(n+1) \text{ times}}, ~\text{ for } n \geq 1
\end{align*}
is a graded Lie algebra with bracket given by (\ref{br}) where $[~,~]$ is replaced by $[~,~]_c$. Moreover, the map
\begin{align}\label{the-phi}
\psi : C^{\ast + 1}_{com} (L,l) \rightarrow C^{\ast + 1 } (L,L),~ (h_1, \ldots, h_{n+1}) \mapsto h_1 + \cdots + h_{n+1}, \text{ for } n \geq 0
\end{align}
is a morphism of graded Lie algebras.
\item[(ii)] A pair $(m_1, m_2) \in C^2_{com}(L,L)= C^2(L,L) \oplus C^2(L,L)$ defines a compatible Leibniz algebra structure on $L$ if and only if $(m_1, m_2) \in C^2_{com}(L,L)$ is a Maurer-Cartan element in the graded Lie algebra $(C^{\ast + 1}_{com} (L,L), [ ~, ~ ]_c)$.
\end{itemize}
\end{thm}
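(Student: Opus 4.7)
The plan is to derive the theorem as a direct specialization of the preceding proposition, applied to the graded Lie algebra of Leibniz cochains recalled in Section~\ref{sec2}. Concretely, I take $\mathfrak{g}^n := C^{n+1}(L,L) = CL^{n+1}(L,L)$ equipped with the Balavoine bracket
\[
[\alpha,\beta] = \alpha \circ \beta + (-1)^{pq+1}\beta \circ \alpha, \qquad \alpha \in C^{p+1}(L,L),\ \beta \in C^{q+1}(L,L),
\]
which makes $\mathfrak{g} = \bigoplus_{n\ge 0}\mathfrak{g}^n$ into a graded Lie algebra, and in which an element $m \in \mathfrak{g}^1 = C^2(L,L)$ is a Maurer-Cartan element precisely when $m$ defines a Leibniz algebra structure on $L$.

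For part (i), I observe that with this choice of $\mathfrak{g}$, the construction $(\mathfrak{g}_{com})^n$ from the preceding proposition coincides term by term with $C^{n+1}_{com}(L,L)$ as defined in the theorem statement, the formula (\ref{br}) defines the same bracket $[~,~]_c$ after substituting the Balavoine bracket for $[~,~]$, and the summation map in (\ref{the-phi}) is literally the $\psi$ produced there. Hence part (i) of the previous proposition delivers part (i) here without further work.

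For part (ii), I combine two earlier equivalences. The first proposition of Section~\ref{sec3} already shows that $(m_1,m_2)$ defines a compatible Leibniz algebra on $L$ if and only if $[m_1,m_1]=[m_1,m_2]=[m_2,m_2]=0$ in the Balavoine graded Lie algebra, i.e.\ if and only if $(m_1,m_2)$ is a compatible pair of Maurer-Cartan elements of $\mathfrak{g}$. Part (ii) of the preceding proposition then identifies such compatible pairs with Maurer-Cartan elements of $(\mathfrak{g}_{com}, [~,~]_c) = (C^{\ast+1}_{com}(L,L), [~,~]_c)$ sitting in degree $1$, which here is $C^2_{com}(L,L) = C^2(L,L) \oplus C^2(L,L)$. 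Composing the two equivalences yields the claim.

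There is no genuine obstacle, since the theorem is built to be a specialization. The only point requiring attention is the bookkeeping of degrees: one must verify that placing $C^{n+1}(L,L)$ in degree $n$ of $\mathfrak{g}$ makes the Balavoine bracket a degree-zero graded bracket and makes the $(n+1)$-fold sum $(\mathfrak{g}_{com})^n$ agree on the nose with $C^{n+1}_{com}(L,L)$ (both in dimension and in the way the tuple components interact under $[~,~]_c$). Once this shift is installed, both parts of the theorem follow from the corresponding parts of the previous proposition.
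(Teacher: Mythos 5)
Your proposal is correct and follows essentially the same route as the paper, which states this theorem as an immediate specialization of the preceding proposition to the Balavoine graded Lie algebra $\mathfrak{g}^n = C^{n+1}(L,L)$, combined with the Maurer--Cartan characterization of (compatible) Leibniz structures from Section~\ref{sec3}. The degree bookkeeping you flag is the only point of substance, and you handle it correctly.
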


Let $(\mathfrak{g},m_1, m_2)$ be a compatible Leibniz algebra. Then there is a degree $1$ coboundary map
\begin{align}\label{d-mu}
d_{(m_1, m_2)} := [ (m_1, m_2), ~ ] : C^n_{com} (\mathfrak{g}, \mathfrak{g}) \rightarrow C^{n+1}_{com} (\mathfrak{g}, \mathfrak{g}), \text{ for } n \geq 1
\end{align}
which makes $(C^{\ast + 1}_{com} (\mathfrak{g}, \mathfrak{g}), [~, ~]_c, d_{(m_1, m_2)} )$ into a differential graded Lie algebra.

\section{Cohomology of compatible Leibniz algebras}\label{sec6}
In this section, we introduce the cohomology of a compatible Leibniz algebra with self representation.

Let $(\mathfrak{g}, m_1, m_2)$ be a compatible Leibniz algebra and $M = (M, l_1, r_1, l_2, r_2)$ be a representation of $\mathfrak{g}$. Let
\begin{align*}
\delta^n_1 : C^n(\mathfrak{g}, M) \rightarrow C^{n+1} (\mathfrak{g}, M), ~ n \geq 0,
\end{align*}
denotes the coboundary operator for the Leibniz cohomology of $(\mathfrak{g}, m_1)$ with coefficients in $(M, l_1, r_1)$, and
\begin{align*}
\delta^n_2 : C^n(\mathfrak{g}, M) \rightarrow C^{n+1} (\mathfrak{g}, M), ~ n \geq 0,
\end{align*}
denotes the coboundary operator for the Leibniz cohomology of $(\mathfrak{g}, m_2)$ with coefficients $(M, l_2, r_2)$. Then, we have
\begin{align*}
(\delta_1)^2 = 0 \qquad \text{ and } \qquad (\delta_2)^2 = 0.
\end{align*}
Now we give the interpretation of $\delta_1$ and $\delta_2$ in terms of two Leibniz algebra structures on $\mathfrak{g} \oplus M$ given in Proposition \ref{semi}. Let $\mu_1, \mu_2 \in C^2 (\mathfrak{g} \oplus M, \mathfrak{g} \oplus M)$ denote the elements corresponding to the Leibniz products on $\mathfrak{g}\oplus M$.

Note that any map $f \in C^n(\mathfrak{g}, M)$ can be lifted to a map $\widetilde{f} \in C^n (\mathfrak{g} \oplus M, \mathfrak{g} \oplus M)$ by
\begin{align*}
\widetilde{h} \big( (x_1, m_1), \ldots, (x_n, m_n)  \big) = \big( 0, h (x_1, \ldots, x_n ) \big),
\end{align*}
for $(x_i, m_i) \in \mathfrak{g} \oplus M$ and $i=1, \ldots, n$. Moreover, we have  $\widetilde{h} = 0$ if and only if $h=0$. With all these notations, we have
\begin{align*}
\widetilde{(\delta_1 h )} = (-1)^{n-1}~[ \mu_1, \widetilde{h}]  \qquad \text{ and } \qquad \widetilde{(\delta_2 h )} = (-1)^{n-1}~[ \mu_2, \widetilde{h}],
\end{align*}
for $h \in C^n( \mathfrak{g}, M)$. 

\begin{prop}\label{delta-comp}
The coboundary operators $\delta_1$ and $\delta_2$ satisfy
\begin{align*}
\delta_1 \circ \delta_2 + \delta_2 \circ \delta_1 = 0.
\end{align*}
\end{prop}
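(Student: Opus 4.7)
My plan is to reduce the identity $\delta_1 \circ \delta_2 + \delta_2 \circ \delta_1 = 0$ to a graded Jacobi computation inside the graded Lie algebra $(C^{\ast+1}(\mathfrak{g}\oplus M, \mathfrak{g}\oplus M),[~,~])$, exploiting the elements $\mu_1,\mu_2$ that encode the two semi-direct product Leibniz structures on $\mathfrak{g}\oplus M$ from Proposition \ref{semi}.

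First, I will observe that by Proposition \ref{semi}, the triple $(\mathfrak{g}\oplus M, \mu_1, \mu_2)$ is a \emph{compatible} Leibniz algebra. Translating this via the Maurer-Cartan characterization established in Section \ref{sec5}, the pair $(\mu_1,\mu_2)\in C^2(\mathfrak{g}\oplus M,\mathfrak{g}\oplus M)$ is a compatible pair of Maurer-Cartan elements, so in particular
\begin{align*}
[\mu_1,\mu_1]=0, \quad [\mu_2,\mu_2]=0, \quad [\mu_1,\mu_2]=0.
\end{align*}
I expect this step to be where all the work is hidden, but it is already done for us by the preceding material: the three bimodule compatibility conditions (a), (b), (c) are precisely what is needed for $\mu_1+\mu_2$ to be Leibniz, i.e.\ for $[\mu_1,\mu_2]=0$.

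Next, for $h\in C^n(\mathfrak{g},M)$, I will use the graded Jacobi identity in $C^{\ast+1}(\mathfrak{g}\oplus M,\mathfrak{g}\oplus M)$ applied to $\mu_1,\mu_2,\widetilde{h}$. Since $\mu_1$ and $\mu_2$ both have degree $1$, Jacobi gives
\begin{align*}
[\mu_1,[\mu_2,\widetilde{h}]]+[\mu_2,[\mu_1,\widetilde{h}]]=[[\mu_1,\mu_2],\widetilde{h}]=0,
\end{align*}
where the second equality uses $[\mu_1,\mu_2]=0$ from the previous paragraph.

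Finally, I will translate the vanishing back to $\delta_1,\delta_2$ using the relation $\widetilde{\delta_i h}=(-1)^{n-1}[\mu_i,\widetilde{h}]$ recorded just before the proposition. Iterating this identity for $h\in C^n(\mathfrak{g},M)$,
\begin{align*}
\widetilde{\delta_1\delta_2 h}=(-1)^n[\mu_1,\widetilde{\delta_2 h}]=(-1)^n(-1)^{n-1}[\mu_1,[\mu_2,\widetilde{h}]]=-[\mu_1,[\mu_2,\widetilde{h}]],
\end{align*}
and symmetrically $\widetilde{\delta_2\delta_1 h}=-[\mu_2,[\mu_1,\widetilde{h}]]$. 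Adding these and applying the Jacobi computation above yields $\widetilde{(\delta_1\delta_2+\delta_2\delta_1)h}=0$. Since the lifting $h\mapsto \widetilde{h}$ is injective (as remarked before the proposition), we conclude $\delta_1\circ\delta_2+\delta_2\circ\delta_1=0$. The only delicate point is the careful bookkeeping of signs in the lifting relation; the genuine algebraic content, namely $[\mu_1,\mu_2]=0$, is guaranteed once the bimodule compatibility axioms are in place.
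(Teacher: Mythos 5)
Your proposal is correct and follows essentially the same route as the paper: lift $h$ to $\widetilde{h}$, use $\widetilde{\delta_i h}=(-1)^{n-1}[\mu_i,\widetilde{h}]$ together with the graded Jacobi identity to get $[[\mu_1,\mu_2],\widetilde{h}]$, and conclude from $[\mu_1,\mu_2]=0$ and the injectivity of the lift. Your explicit justification of $[\mu_1,\mu_2]=0$ via Proposition \ref{semi} and the Maurer--Cartan characterization is a slightly more careful spelling-out of a step the paper leaves implicit, but the argument is the same.
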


\begin{proof}
For any $h \in C^n ( \mathfrak{g},M)$, we have
\begin{align*}
&\widetilde{( \delta_1 \circ \delta_2 + \delta_2 \circ \delta_1)(h)}  \\
&= (-1)^{n} ~  [\mu_1, \widetilde{\delta_2 h}] ~+~ (-1)^{n} ~ [\mu_2, \widetilde{\delta_1 h}] \\
&= - [\mu_1, [\mu_2, \widetilde{h}] ]  - [\mu_2, [\mu_1, \widetilde{h}]  \\
&= - [[\mu_1, \mu_2], \widetilde{h}]  = 0 ~~~~ \qquad (\text{because} ~[\mu_1, \mu_2] = 0).
\end{align*}
Therefore, it follows that  $  ( \delta_1 \circ \delta_2 + \delta_1 \circ \delta_1)(h) = 0$. Hence, $\delta_1 \circ \delta_2 + \delta_2 \circ \delta_1 = 0$.
\end{proof}

The compatibility condition of the above proposition leads to cohomology associated with a compatible Leibniz algebra with coefficients in a compatible representation. Let $\mathfrak{g}$ be a compatible Leibniz algebra and $M$ be a representation of it. We define the $n$-th cochain group $C^n_{com} (\mathfrak{g}, M)$, for $n \geq 0$, by
\begin{align*}
C^0_{com} (\mathfrak{g}, M) :=~& \{ m \in M ~|~ x \cdot_1 m - m \cdot_1 x = x \cdot_2 m - m \cdot_2 x, ~\forall x \in \mathfrak{g} \},\\
C^n_{com} (\mathfrak{g}, M) :=~& \underbrace{C^n (\mathfrak{g}, M) \oplus \cdots \oplus C^n (\mathfrak{g}, M)}_{n \text{ copies}}, ~ \text{ for } n \geq 1.
\end{align*}
Define a map $\delta_c : C^n_{com} (\mathfrak{g}, M) \rightarrow C^{n+1}_{com} (\mathfrak{g}, M)$, for $n \geq 0$, by
\begin{align}
\delta_c (m) (a) :=~&  x \cdot_1 m - m \cdot_1 x = x \cdot_2 m - m \cdot_2 x, \text{ for } m \in C^0_{com} (\mathfrak{g}, M) \text{ and } x \in \mathfrak{g}, \label{dc-1}\\
\delta_c (h_1, \ldots, h_n ) :=~& ( \delta_1 h_1, \ldots, \underbrace{\delta_1 h_i + \delta_2 h_{i-1}}_{i-\text{th place}}, \ldots, \delta_2 h_n),\label{dc-2}
\end{align}
for $(h_1, \ldots, h_n ) \in C^n_{com} (\mathfrak{g}, M)$. 



\begin{prop}
The map $\delta_c$ is a coboundary operator, i.e., $(\delta_c)^2 = 0$.
\end{prop}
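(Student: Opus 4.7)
The proof is essentially a bookkeeping exercise that reduces to the three identities
$\delta_1^2=0$, $\delta_2^2=0$, and the anticommutation $\delta_1\delta_2+\delta_2\delta_1=0$ established in Proposition \ref{delta-comp}. The plan is to apply $\delta_c$ twice and check that every component of the resulting tuple vanishes.

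First I would treat the generic case $n\geq 1$. For $(h_1,\ldots,h_n)\in C^n_{com}(\mathfrak{g},M)$, write $\delta_c(h_1,\ldots,h_n)=(k_1,\ldots,k_{n+1})$ with
\begin{align*}
k_1=\delta_1 h_1,\qquad k_i=\delta_1 h_i+\delta_2 h_{i-1}\ (2\leq i\leq n),\qquad k_{n+1}=\delta_2 h_n.
\end{align*}
Applying $\delta_c$ once more yields $(l_1,\ldots,l_{n+2})$ whose $i$-th entry is $\delta_1 k_i+\delta_2 k_{i-1}$ (with the obvious boundary conventions $k_0=k_{n+2}=0$). I would then substitute the expressions for the $k_j$ and regroup: the extreme entries give $l_1=\delta_1^2 h_1$ and $l_{n+2}=\delta_2^2 h_n$, while each interior entry collects into $\delta_1^2 h_i+(\delta_1\delta_2+\delta_2\delta_1)h_{i-1}+\delta_2^2 h_{i-2}$. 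Every summand now vanishes: the pure $\delta_1^2$ and $\delta_2^2$ terms by the Leibniz cohomology axiom for $(\mathfrak{g},m_1)$ and $(\mathfrak{g},m_2)$, and the mixed term by Proposition \ref{delta-comp}.

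Next I would dispose of the $n=0$ case. For $m\in C^0_{com}(\mathfrak{g},M)$, the very definition (\ref{dc-1}) already uses the compatibility $x\cdot_1 m-m\cdot_1 x=x\cdot_2 m-m\cdot_2 x$; calling this common value $h(x)$, we have $h=\delta_1^0 m=\delta_2^0 m$ in $C^1(\mathfrak{g},M)=C^1_{com}(\mathfrak{g},M)$. Then (\ref{dc-2}) for $n=1$ reads $\delta_c h=(\delta_1 h,\delta_2 h)=(\delta_1\delta_1^0 m,\delta_2\delta_2^0 m)=(0,0)$, so $\delta_c^2 m=0$.

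I do not foresee any real obstacle: once Proposition \ref{delta-comp} is in hand, the argument is purely combinatorial. The only point requiring attention is to check the boundary components $l_1$, $l_2$, $l_{n+1}$, $l_{n+2}$ separately (where some of the three summands in the generic formula are absent) and to notice that the $n=0$ case needs a small ad hoc argument, since the formula for $\delta_c$ on $C^0_{com}$ is of a different shape than the uniform formula (\ref{dc-2}) used for $n\geq 1$.
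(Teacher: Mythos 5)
Your proposal is correct and follows essentially the same route as the paper: expand $\delta_c^2$ componentwise, observe that the interior entries collect into $\delta_1^2 h_i+(\delta_1\delta_2+\delta_2\delta_1)h_{i-1}+\delta_2^2 h_{i-2}$ (with the boundary entries being degenerate cases), and invoke $\delta_1^2=0$, $\delta_2^2=0$ and Proposition \ref{delta-comp}; the $n=0$ case is likewise handled in the paper exactly as you do, using that $\delta_c m$ equals the common value $\delta_1 m=\delta_2 m$ on $C^0_{com}(\mathfrak{g},M)$.
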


\begin{proof}
For $m \in C^0_c (\mathfrak{g},M)$, we have
\begin{align*}
(\delta_c)^2 (m) = \delta_c ( \delta_c m ) =~& (\delta_1 \delta_c m~,\delta_2 \delta_c m ) \\
=~&  ( \delta_1 \delta_1 m~, \delta_2 \delta_2 m) = 0.
\end{align*}
Moreover, for any $(h_1, \ldots, h_n) \in C^n_c (\mathfrak{g},M)$, $n \geq 1$, we have
\begin{align*}
(\delta_c)^2 (h_1, \ldots, h_n)
&= \delta_c \big(    \delta_1 h_1, \ldots,  \delta_1 h_i +  \delta_2 h_{i-1}, \ldots,  \delta_2 h_n \big) \\
&= \big(    \delta_1  \delta_1 h_1~,  \delta_2  \delta_1 h_1 +  \delta_1  \delta_2 h_1 +  \delta_1  \delta_1 h_2~, \ldots, \\
& \qquad \underbrace{   \delta_2  \delta_2 h_{i-2} +  \delta_2 \delta_1 h_{i-1} + \delta_1 \delta_2 h_{i-1} + \delta_1  \delta_1 h_i  }_{3 \leq i \leq n-1}~, \ldots, \\
& \qquad  \delta_2 \delta_2  h_{n-1} + \delta_2 \delta_1 h_n +  \delta_1 \delta_2 h_n ~,~  \delta_2  \delta_2 h_n \big) \\
&= 0 ~~~\quad (\text{from Proposition } \ref{delta-comp}).
\end{align*}
This proves that $(\delta_c)^2 = 0$.
\end{proof}

Thus, we have a cochain complex $\{ C^\ast_{com} (\mathfrak{g}, M), \delta_c \}$. Let $Z^n_{com} (\mathfrak{g}, M)$ denote the space of $n$-cocycles and $B^n_{com} (\mathfrak{g}, M)$ the space of $n$-coboundaries. Then we have $B^n_{com} (\mathfrak{g}, M) \subset Z^n_{com} (\mathfrak{g}, M)$, for $n \geq 0$. The corresponding quotient groups
\begin{align*}
H^n_{com} (\mathfrak{g}, M) := \frac{ Z^n_{com} (\mathfrak{g}, M) }{ B^n_{com} (\mathfrak{g}, M)}, \text{ for } n \geq 0
\end{align*}
are called the cohomology of the compatible Leibniz algebra $\mathfrak{g}$ with coefficients in the representation $M$.

\section{Formal deformation theory of compatible Leibniz algebras}\label{sec7}
In this section, we study formal deformation theory of compatible Leibniz algebras. In this study, we will closely follow the deformation theory by Gerstenhaber \cite{gers, gers2} for associative algebras. It is based on formal power series  in variable $t$, $\mathbb{K}[[t]]$. Any vector space $\mathfrak{g}$ extends naturally  to a formal space $\mathfrak{g}[[t]]=\{ \sum_{i\leq 0} a_i t_i,\;  a_i\in \mathfrak{g}\}$.
\begin{defn}\label{deform defn}
Let $(\mathfrak{g}, m_1, m_2)$ be a compatible Leibniz algebra. A one-parameter formal deformation of $(\mathfrak{g}, m_1, m_2)$ is a triple $(\mathfrak{g}[[t]], m_{1,t}, m_{2,t})$, where
\begin{align*}
m_{1,t}, m_{2,t} : \mathfrak{g}[[t]]\times \mathfrak{g}[[t]] \to \mathfrak{g}[[t]]
\end{align*}
are $\mathbb{K}[[t]]$-bilinear maps of the form
$$m_{1,t} = \sum_{i\geq 0} m_{1,i} t^i,~ m_{2,t} = \sum_{i\geq 0} m_{2,i} t^i,$$
such that
\begin{itemize}
\item[(i)]  $m_{1,i}, m_{2,i} : \mathfrak{g}\times \mathfrak{g} \to \mathfrak{g}$ are $\mathbb{K}$-bilinear maps for all $i\geq 0$.

\item[(ii)] $m_{1,0} = m_1, m_{2,0} = m_2$ are the original bracket operations respectively.

\item[(iii)] $(\mathfrak{g}[[t]], m_{1,t})$ and $(\mathfrak{g}[[t]], m_{2,t})$ are both Leibniz algebras, that is, for all $x,y,z\in \mathfrak{g}$, we have
\begin{align}
&m_{1,t}(x, (m_{1,t}(y, z)) = m_{1,t}(m_{1,t}(x, y),z)- m_{1,t}(m_{1,t}(x, z),y),\\
&m_{2,t}(x, (m_{2,t}(y, z)) = m_{2,t}(m_{2,t}(x, y),z)- m_{2,t}(m_{2,t}(x, z),y).\label{deform ass-2}
\end{align}

\item[(iv)] $(\mathfrak{g}[[t]], m_{1,t}, m_{2,t})$ satisfies the following compatibility conditions:
\begin{align}\label{deform com}
 &m_{2,t}(x, m_{1 ,t}(y,z)) + m_{1,t}(x, m_{2 ,t}(y,z)) \\
 &= m_{2,t}(m_{1,t}(x, y),z) + m_{1,t}(m_{2,t}(x, y),z)- m_{2,t}(m_{1,t}(x, z),y)- m_{1,t}(m_{2,t}(x, z),y)\nonumber 
  \end{align}
 for all $x,y,z \in \mathfrak{g}$.
\end{itemize}
\end{defn}

 Equations (4.1) and (\ref{deform ass-2}) are equivalent to the following equations. For all $n\geq 0$, we have
\begin{align}
&\sum_{i+j=n} \big(m_{1,i}(x, (m_{1,j}(y, z))- m_{1,i}(m_{1,j}(x, y),z)+ m_{1,i}(m_{1,j}(x, z),y)\big)=0 \label{deform ass-1-1}\\
&\sum_{i+j=n} \big(m_{2,i}(x, (m_{2,j}(y, z)) - m_{2,i}(m_{2,j}(x, y),z) + m_{2,i}(m_{2,j}(x, z),y)\big)=0.\label{deform ass-2-1}
\end{align}

Equivalently, we can write Equations \ref{deform ass-1-1} and \ref{deform ass-2-1} as follows:

\begin{align}
&\sum_{i+j=n} [m_{1,i}, m_{1,j}] = 0,\label{deform ass-1-3}\\
&\sum_{i+j=n} [m_{2,i}, m_{2,j}] = 0.\label{deform ass-2-3}
\end{align}

The condition \ref{deform com} is equivalent to the following equations. For all $n\geq 0$, we have
\begin{align}
&\sum_{i+j=n}\big( m_{2,i}(x, m_{1 ,j}(y,z)) + m_{1,i}(x, m_{2 ,j}(y,z))\big)\label{deform com 1} \\
\nonumber &= \sum_{i+j=n}\big( m_{2,i}(m_{1,j}(x, y),z) + m_{1,i}(m_{2,j}(x, y),z)- m_{2,i}(m_{1,j}(x, z),y)- m_{1,i}(m_{2,j}(x, z),y)\big).
\end{align}

For all $n\geq 0$, we can re-write the Equation \ref{deform com 1} as follows:

\begin{align}
&\sum_{i+j=n} [m_{1,i}, m_{2,j}] = 0. \label{deform com 2}
\end{align}

Therefore, using Equations \ref{deform ass-1-3}, \ref{deform ass-2-3}, and \ref{deform com 2}, we can say that $(\mathfrak{g}[[t]], m_{1,t}, m_{2,t})$ is a one-parameter formal deformation of the compatible Leibniz algebra $\mathfrak{g}$ if for all $n\geq 0$, and $x,y,z\in \mathfrak{g}$, it satisfies the following equations:
\begin{align*}
&\sum_{i+j=n} [m_{1,i}, m_{1,j}] = 0,\\
&\sum_{i+j=n} [m_{2,i}, m_{2,j}] = 0,\\
&\sum_{i+j=n} [m_{1,i}, m_{2,j}] = 0.
\end{align*}

For $n=0$, we have 
$$[m_{1,0}, m_{1,0}] = 0, ~ [m_{2,0}, m_{2,0}]= 0,~[m_{1,0}, m_{2,0}] = 0.$$
This is same as
$$[m_{1}, m_{1}] = 0, ~ [m_{2}, m_{2}] = 0,~[m_{1}, m_{2}] = 0.$$
Note that the above relations are nothing but original Leibniz identities and compatibility relation.

For $n=1$, we have
\begin{align*}
& [m_{1,1}, m_{1,0}] + [m_{1,0}, m_{1,1}]+ [m_{1,0}, m_{1,0}] = 0,\\
&[m_{2,1}, m_{2,0}] + [m_{2,0}, m_{2,1}] + [m_{2,0}, m_{2,0}] = 0,\\
&[m_{1,1}, m_{2,0}] + [m_{1,0}, m_{2,1}] + [m_{1,0}, m_{2,0}] = 0.
\end{align*}

Equivalently, we have 

\begin{align*}
& [m_{1,1}, m_{1}] + [m_{1}, m_{1,1}]+ [m_{1}, m_{1}] = 0,\\
&[m_{2,1}, m_{2}] + [m_{2}, m_{2,1}] + [m_{2}, m_{2}] = 0,\\
&[m_{1,1},m_{2}] + [m_{1}, m_{2,1}] + [m_{1}, m_{2}] = 0.
\end{align*}
As $[m_1,m_1]=0$ and $[m_2,m_2]=0$, we have
\begin{align*}
& [m_{1,1}, m_{1}] = 0,\\
&[m_{2,1},m_{2}] = 0,\\
&[m_{1,1}, m_{2}] + [m_{1}, m_{2,1}]  = 0.
\end{align*}
Therefore, 
$$\delta^{2}_c(m_{1,1}, m_{2,1})=0.$$
Thus, $(m_{1,1}, m_{2,1})$ is a $2$-cocyle in the cohomology of the compatible Leibniz algebra $\mathfrak{g}$ with coefficients in itself. This pair $(m_{1,1}, m_{2,1})$ is called the infinitesimal of the deformation. This means the infinitesimal of the deformation is a $2$-cocycle. More generally, we have the following definition.

\begin{defn}
If $(m_{1,n}, m_{2,n})$ is the first non-zero term after $(m_{1,0}, m_{2,0})=(m_1, m_2)$ of the formal deformation $(m_{1,t}, m_{2,t})$, then we say that $(m_{1,n}, m_{2,n})$ is the $n$-infinitesimal of the deformation.
\end{defn}

\begin{thm}\label{infintesimal}
The $n$-infinitesimal is a $2$-cocycle.
\end{thm}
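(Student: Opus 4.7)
My plan is to specialize the three families of deformation equations at the coefficient of $t^n$ to the infinitesimal situation, and then read off precisely the three components of $\delta_c(m_{1,n}, m_{2,n})$ in $C^3_{com}(\mathfrak{g},\mathfrak{g})$. I would follow the template already displayed in the paper for the case $n=1$, only taking advantage of the vanishing of the intermediate terms.

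First I would recall the identities
\begin{equation*}
\sum_{i+j=n}[m_{1,i},m_{1,j}]=0,\quad \sum_{i+j=n}[m_{2,i},m_{2,j}]=0,\quad \sum_{i+j=n}[m_{1,i},m_{2,j}]=0,
\end{equation*}
which the deformation must satisfy at order $n$. By the definition of the $n$-infinitesimal we have $m_{1,k}=m_{2,k}=0$ for every $1\le k\le n-1$, so in each of these three sums every summand whose two indices lie strictly between $0$ and $n$ vanishes. Only the pairs $(0,n)$ and $(n,0)$ survive, and the equations collapse to
\begin{equation*}
[m_1,m_{1,n}]+[m_{1,n},m_1]=0,\qquad [m_2,m_{2,n}]+[m_{2,n},m_2]=0,\qquad [m_1,m_{2,n}]+[m_{1,n},m_2]=0.
\end{equation*}

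Next I would invoke the bracket convention from Section~\ref{sec2}: for two elements of $CL^{p+1}(\mathfrak{g},\mathfrak{g})$ with $p=q=1$ the sign $(-1)^{pq+1}$ equals $1$, so the graded Lie bracket is symmetric in this bidegree. Consequently the first two equations give $[m_1,m_{1,n}]=0$ and $[m_2,m_{2,n}]=0$, while the third can be rewritten as $[m_1,m_{2,n}]+[m_2,m_{1,n}]=0$. Translating via $\delta_i=[m_i,-]$ (which is how the Leibniz coboundary arises from the DGLA structure, as used throughout Section~\ref{sec5} and in the proof of Proposition~\ref{delta-comp}), the three identities become
\begin{equation*}
\delta_1 m_{1,n}=0,\qquad \delta_2 m_{2,n}=0,\qquad \delta_1 m_{2,n}+\delta_2 m_{1,n}=0.
\end{equation*}

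Finally, I would compare with the explicit formula~\eqref{dc-2} for $\delta_c$ on a pair in $C^2_{com}(\mathfrak{g},\mathfrak{g})$:
\begin{equation*}
\delta_c(m_{1,n},m_{2,n})=\bigl(\delta_1 m_{1,n},\ \delta_1 m_{2,n}+\delta_2 m_{1,n},\ \delta_2 m_{2,n}\bigr)=0,
\end{equation*}
which exhibits $(m_{1,n},m_{2,n})$ as an element of $Z^2_{com}(\mathfrak{g},\mathfrak{g})$, as required. There is no substantive obstacle; the only points requiring care are the symmetry of the graded bracket in bidegree $(1,1)$ and the bookkeeping that ensures all the $(i,j)$-summands with $1\le i,j\le n-1$ really drop out under the infinitesimal hypothesis.
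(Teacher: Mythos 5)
Your argument is correct and is exactly the one the paper intends: the paper's proof is a one-line reference back to the $n=1$ computation, and you have carried out that computation at order $n$, correctly noting that the infinitesimal hypothesis kills all summands with indices strictly between $0$ and $n$ and that the symmetry of the graded bracket in bidegree $(1,1)$ turns the surviving terms into $\delta_1 m_{1,n}=0$, $\delta_2 m_{2,n}=0$, $\delta_1 m_{2,n}+\delta_2 m_{1,n}=0$, i.e.\ $\delta_c(m_{1,n},m_{2,n})=0$. No gaps.
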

The proof is similar of showing that the infinitesimal is a $2$-cocycle.

\subsection{Equivalent deformation and cohomology}
Let $\mathfrak{g}_t=(\mathfrak{g},m_{1,t},m_{2,t})$ and $\mathfrak{g}'_t=(\mathfrak{g},m'_{1,t},m'_{2,t})$ be two one-parameter compatible Leibniz algebra deformations of $(\mathfrak{g},m_1, m_2)$, where $m_{1,t} = \sum_{i\geq 0} m_{1,i} t^i,~ m_{2,t} = \sum_{i\geq 0} m_{2,i} t^i,$ and $m'_{1,t} = \sum_{i\geq 0} m'_{1,i} t^i,~ m'_{2,t} = \sum_{i\geq 0} m'_{2,i} t^i$. 
\begin{defn}
 Two deformations $\mathfrak{g}_t$ and $\mathfrak{g}\rq_t$ are said to be equivalent if there exists a $\mathbb{K}[[t]]$-linear isomorphism $\Phi_t:\mathfrak{g}[[t]]\to \mathfrak{g}[[t]]$ of the form $\Phi_t=\sum_{i\geq 0}\phi_it^i$, where $\phi_0=id$ and $\phi_i:\mathfrak{g}\to \mathfrak{g}$ are $\mathbb{K}$-linear maps such that the following relations holds:
 \begin{align}\label{equivalent-1}
 &\Phi_t\circ m_{1,t}\rq=m_{1,t}\circ (\Phi_t\otimes \Phi_t),\\
 \label{equivalent-2}&\Phi_t\circ m_{2,t}\rq=m_{2,t}\circ (\Phi_t\otimes \Phi_t).
 \end{align}
 \end{defn}
 \begin{defn}
 A deformation $(m_{1,t}, m_{2,t})$ of a compatible Leibniz algebra $\mathfrak{g}$ is called trivial if $(m_{1,t}, m_{2,t})$ is equivalent to the deformation $(m_{1,0}, m_{2,0})$, which is the same as the undeformed one. A compatible Leibniz algebra $\mathfrak{g}$ is called rigid if it has only trivial deformation up to equivalence.
 \end{defn}
 Equations (\ref{equivalent-1}-\ref{equivalent-2}) may be written as
 \label{equivalent 11}
\begin{align}
& \Phi_t(m\rq_{1,t}(x,y))=m_{1,t}(\Phi_t(x),\Phi_t(y)),\\
& \Phi_t(m\rq_{2,t}(x,y))=m_{2,t}(\Phi_t(x),\Phi_t(y)),\,\,\,\text{for all}~ x,y\in \mathfrak{g}.
 \end{align} 
 Note that the above equations are equivalent to the following equations:
 \begin{align}
 &\sum_{i\geq 0}\phi_i\bigg( \sum_{j\geq 0}m\rq_{1,j}(x,y)t^j \bigg)t^i=\sum_{i\geq 0}m_{1,i}\bigg( \sum_{j\geq 0}\phi_j(x)t^j,\sum_{k\geq 0}\phi_k(y)t^k \bigg)t^i,\\
  &\sum_{i\geq 0}\phi_i\bigg( \sum_{j\geq 0}m\rq_{2,j}(x,y)t^j \bigg)t^i=\sum_{i\geq 0}m_{2,i}\bigg( \sum_{j\geq 0}\phi_j(x)t^j,\sum_{k\geq 0}\phi_k(y)t^k \bigg)t^i.\\
 \end{align}
 This is same as the following equations:
 \begin{align}
 \label{equivalent 10}&\sum_{i,j\geq 0}\phi_i(m\rq_{1,j}(x,y))t^{i+j}=\sum_{i,j,k\geq 0}m_{1,i}(\phi_j(x),\phi_k(y))t^{i+j+k},\\
 &\sum_{i,j\geq 0}\phi_i(m\rq_{2,j}(x,y))t^{i+j}=\sum_{i,j,k\geq 0}m_{2,i}(\phi_j(x),\phi_k(y))t^{i+j+k}.
 \end{align}
Using $\phi_0=Id$ and comparing constant terms on both sides of the above equations, we have
 \begin{align*}
 &m\rq_{1,0}(x,y)=m_1(x,y),\\
 &m\rq_{2,0}(x,yy)=m_2(x,y).
  \end{align*}
  Now comparing coefficients of $t$, we have
  \begin{align}\label{equivalent main}
&m\rq_{1,1}(x,y)+\phi_1(m\rq_{1,0}(x,y))=m_{1,1}(x,y)+m_{1,0}(\phi_1(x),y)+m_{1,0}(x,\phi_1(y)),\\
&m\rq_{2,1}(x,y)+\phi_1(m\rq_{2,0}(x,y))=m_{2,1}(x,y)+m_{2,0}(\phi_1(x),y)+m_{2,0}(x,\phi_1(y)).\label{equivalent main 1}
  \end{align}
  The Equations (\ref{equivalent main})-(\ref{equivalent main 1}) are same as
  \begin{align*}
& m\rq_{1,1}(x,y)-m_{1,1}(x,y)=m_1(\phi_1(x),y)+m_1(x,\phi_1(y))-\phi_1(m_1(x,y))=\delta_1 \phi_1(x,y).\\
& m\rq_{2,1}(x,y)-m_{2,1}(x,y)=m_2(\phi_1(x),y)+m_2(x,\phi_1(y))-\phi_1(m_2(x,y))=\delta_2 \phi_1(x,y).\\
\end{align*}
Thus, we have the following proposition.
  \begin{prop}
 Two equivalent deformations have cohomologous infinitesimals.
  \end{prop}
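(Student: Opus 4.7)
The proof proposal is essentially to harvest what the paper has already computed in equations (\ref{equivalent main})--(\ref{equivalent main 1}) and reinterpret it in terms of the compatible cochain complex $(C^{\ast}_{com}(\mathfrak{g},\mathfrak{g}),\delta_c)$ introduced in Section \ref{sec6}.

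First I would recall that the first Taylor coefficient $\phi_1$ of the equivalence $\Phi_t=\mathrm{id}+\phi_1 t+\cdots$ is a $\mathbb{K}$-linear map $\phi_1:\mathfrak{g}\to\mathfrak{g}$, i.e.\ an element of $C^1(\mathfrak{g},\mathfrak{g})=C^1_{com}(\mathfrak{g},\mathfrak{g})$ (recall that for $n=1$ the compatible cochain group is a single copy of $C^1$). Then I would apply the compatible coboundary formula (\ref{dc-2}) in the degenerate case $n=1$, which unambiguously reads
\begin{equation*}
\delta_c(\phi_1)=(\delta_1\phi_1,\,\delta_2\phi_1)\in C^2(\mathfrak{g},\mathfrak{g})\oplus C^2(\mathfrak{g},\mathfrak{g})=C^2_{com}(\mathfrak{g},\mathfrak{g}).
\end{equation*}

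Next I would quote the two identities already derived from comparing the coefficients of $t$ in the $\mathbb{K}[[t]]$-linear equivalence condition, namely
\begin{equation*}
m'_{1,1}(x,y)-m_{1,1}(x,y)=\delta_1\phi_1(x,y),\qquad m'_{2,1}(x,y)-m_{2,1}(x,y)=\delta_2\phi_1(x,y).
\end{equation*}
Packaging these two scalar equalities as a single identity in $C^2_{com}(\mathfrak{g},\mathfrak{g})$ gives
\begin{equation*}
(m'_{1,1},m'_{2,1})-(m_{1,1},m_{2,1})=(\delta_1\phi_1,\delta_2\phi_1)=\delta_c(\phi_1),
\end{equation*}
which displays the difference of the infinitesimals as an explicit $2$-coboundary. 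By Theorem \ref{infintesimal} both $(m_{1,1},m_{2,1})$ and $(m'_{1,1},m'_{2,1})$ are already known to be $2$-cocycles, so this equality shows they represent the same class in $H^2_{com}(\mathfrak{g},\mathfrak{g})$, which is precisely the assertion.

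There is essentially no hard step: the algebraic content was extracted while defining equivalence. The only point that requires a word of care is the low-degree bookkeeping, i.e.\ making sure the indexing convention for $\delta_c$ in Section \ref{sec6} specializes correctly to $n=1$ so that $\delta_c(\phi_1)$ is indeed the pair $(\delta_1\phi_1,\delta_2\phi_1)$ and not some larger tuple; once this is verified, the proof is a one-line assembly. If one wanted a completely self-contained argument, it would suffice to expand $\Phi_t\circ m'_{i,t}=m_{i,t}\circ(\Phi_t\otimes\Phi_t)$ for $i=1,2$ and read off the $t^1$-terms directly, which is the calculation the paper has already carried out.
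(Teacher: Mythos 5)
Your proof is correct and follows essentially the same route as the paper: compare the coefficients of $t$ in $\Phi_t\circ m'_{i,t}=m_{i,t}\circ(\Phi_t\otimes\Phi_t)$, identify the resulting expression as $\delta_i\phi_1$, and package the pair as $\delta_c(\phi_1)\in C^2_{com}(\mathfrak{g},\mathfrak{g})$. The only difference is that the paper runs the identical computation for the $n$-infinitesimal (the first non-vanishing coefficient) rather than just the $t^1$-coefficient, obtaining $(m'_{1,n},m'_{2,n})-(m_{1,n},m_{2,n})=(\delta_1\phi_n,\delta_2\phi_n)$; your argument extends verbatim to that case.
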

  \begin{proof}
  Suppose $\mathfrak{g}_t=(\mathfrak{g},m_{1,t}, m_{2,t})$ and $\mathfrak{g}'_t=(\mathfrak{g},m'_{1,t}, m'_{2,t})$ be two equivalent one-parameter formal deformations of a compatible Leibniz algebra $\mathfrak{g}$. Suppose $(m_{1,n}, m_{2,n})$ and $(m'_{1,n}, m'_{2,n})$ be two $n$-infinitesimals of the deformations $(m_{1,t}, m_{2,t})$ and $(m'_{1,t}, m'_{2,t})$ respectively. Using Equation (\ref{equivalent 10}) we get,
  \begin{align*}
  &m\rq_{1,n}(x,y)+\phi_n(m\rq_{1,0}(x,y))=m_{1,n}(x,y)+m_{1,0}(\phi_n(x),y)+m_{1,0}(x,\phi_n(y)),\\
  &m\rq_{1,n}(x,y)-m_{1,n}(x,y)=m_{1}(\phi_n(x),y)+m_1(x,\phi_n(y))-\phi_n(m\rq_1(x,y))=\delta_1 \phi_n(x,y),
  \end{align*}
and  
   \begin{align*}
  &m\rq_{2,n}(x,y)+\phi_n(m\rq_{2,0}(x,y))=m_{2,n}(x,y)+m_{2,0}(\phi_n(x),y)+m_{2,0}(x,\phi_n(y)),\\
  &m\rq_{2,n}(x,y)-m_{2,n}(x,y)=m_{2}(\phi_n(x),y)+m_2(x,\phi_n(y))-\phi_n(m\rq_2(x,y))=\delta_2 \phi_n(x,y). 
  \end{align*}
  
  Thus, infinitesimals of two deformations determines same cohomology class.
    \end{proof}
  \begin{thm}
  A non-trivial deformation of a compatible Leibniz algebra is equivalent to a deformation whose infinitesimal is not a coboundary.
  \end{thm}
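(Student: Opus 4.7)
The plan is to prove the contrapositive: if the $n$-infinitesimal of a formal deformation happens to be a coboundary, then one can replace that deformation by an equivalent one whose first non-zero term occurs at a strictly higher power of $t$; iterating this procedure either terminates at a stage where the infinitesimal is not a coboundary, or converges (in the $t$-adic topology) to the trivial deformation, contradicting non-triviality.

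For the base step, suppose $(m_{1,t}, m_{2,t}) = (m_1 + m_{1,n} t^n + \cdots,~ m_2 + m_{2,n} t^n + \cdots)$ with $n$-infinitesimal $(m_{1,n}, m_{2,n}) = \delta_c(\phi)$ for some $\phi \in C^1_{com}(\mathfrak{g}, \mathfrak{g}) = \mathrm{Hom}(\mathfrak{g}, \mathfrak{g})$; by formula (\ref{dc-2}) this is equivalent to $m_{1,n} = \delta_1 \phi$ and $m_{2,n} = \delta_2 \phi$. Let $\Phi_t := \mathrm{id}_{\mathfrak{g}} + \phi\, t^n$, which is a $\mathbb{K}[[t]]$-linear automorphism of $\mathfrak{g}[[t]]$ with inverse $\sum_{k \geq 0}(-1)^k \phi^k t^{nk}$, and define $m'_{i,t}(x,y) := \Phi_t^{-1}\bigl(m_{i,t}(\Phi_t(x), \Phi_t(y))\bigr)$ for $i = 1, 2$. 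Expanding in powers of $t$, the coefficient of $t^k$ in $m'_{i,t}$ equals $m_{i,k}$ for $0 \leq k < n$, while the coefficient of $t^n$ equals $m_{i,n} - \bigl(\phi \circ m_i - m_i(\phi \cdot, -) - m_i(-, \phi \cdot)\bigr) = m_{i,n} - \delta_i \phi = 0$. Since $(m'_{1,t}, m'_{2,t})$ is obtained from $(m_{1,t}, m_{2,t})$ by simultaneous conjugation, both Leibniz identities and the compatibility condition transfer automatically, so $(m'_{1,t}, m'_{2,t})$ is again a deformation of $(\mathfrak{g}, m_1, m_2)$, now with first non-zero term at some degree $n_1 > n$.

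Next I iterate. If $(m'_{1,n_1}, m'_{2,n_1})$ is again a coboundary $\delta_c(\phi^{(1)})$, apply the same construction with $\Phi^{(2)}_t := \mathrm{id} + \phi^{(1)} t^{n_1}$, producing a further equivalent deformation whose first non-zero term lies at some $n_2 > n_1$. Either the procedure terminates with an infinitesimal that is not a coboundary (and we are done), or we obtain a sequence of equivalences $\Phi^{(k)}_t = \mathrm{id} + \phi^{(k-1)} t^{n_{k-1}}$ with $n_k \to \infty$. The composition $\Psi_t := \cdots \circ \Phi^{(k)}_t \circ \cdots \circ \Phi^{(2)}_t \circ \Phi^{(1)}_t$ converges in the $t$-adic topology because, for every fixed power $t^N$, only finitely many of the $\Phi^{(k)}_t$ contribute a correction. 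The resulting $\Psi_t$ is a formal automorphism of $\mathfrak{g}[[t]]$ equivalencing $(m_{1,t}, m_{2,t})$ to a deformation whose non-constant part vanishes to all orders, i.e.\ to the trivial deformation $(m_1, m_2)$. This contradicts non-triviality, so at some stage the infinitesimal must fail to be a coboundary.

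The main obstacle is organizational rather than conceptual: one must verify that a single cochain $\phi \in C^1_{com}(\mathfrak{g}, \mathfrak{g})$ can simultaneously cancel both components $m_{1,n}$ and $m_{2,n}$ of the infinitesimal. This is precisely what the definition of $\delta_c$ in (\ref{dc-2}) provides in degree $1$, namely $\delta_c(\phi) = (\delta_1 \phi, \delta_2 \phi)$, which is why the compatible cohomology is the correct object to work with rather than the two separate Leibniz cohomologies. A subsidiary check is that the expansion of $\Phi_t^{-1}\bigl(m_{i,t}(\Phi_t(x), \Phi_t(y))\bigr)$ does not disturb the lower-order coefficients; since $\Phi_t$ deviates from the identity only at order $t^n$, this is immediate.
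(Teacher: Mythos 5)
Your proposal is correct and follows essentially the same route as the paper: gauge away a coboundary infinitesimal with $\Phi_t = \mathrm{id} + \phi\, t^n$ and iterate (note the paper writes the hypothesis as $(m_{1,n},m_{2,n}) = -\delta^1_c\phi_n$, which fixes the sign so that the $t^n$-coefficient $m_{i,n}+\delta_i\phi_n$ vanishes; with your convention $(m_{1,n},m_{2,n})=\delta_c(\phi)$ you should take $\Phi_t=\mathrm{id}-\phi\,t^n$, a harmless adjustment). Your handling of the $t$-adic convergence of the infinite composition is in fact more careful than the paper's, which simply asserts that the process ``must stop'' for a non-trivial deformation.
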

  \begin{proof}
 Let $(m_{1,t}, m_{2,t})$ be a deformation of the compatible Leibniz algebra $\mathfrak{g}$ and $(m_{1,n}, m_{2,n})$ be the $n$-infinitesimal of the deformation for some $n\geq 1$. Then by Theorem (\ref{infintesimal}), $(m_{1,n}, m_{2,n})$ is a $2$-cocycle, that is, $\delta^2_c (m_{1,n}, m_{2,n})=0$. Suppose $(m_{1,n}, m_{2,n})=-\delta^1_c\phi_n$ for some $\phi_n\in C^1_c(\mathfrak{g}, \mathfrak{g})$, that is, $(m_{1,n}, m_{2,n})$ is a coboundary. We define a formal isomorphism $\Phi_t$ of $\mathfrak{g}[[t]]$ as follows:
  $$\Phi_t(x)=x+\phi_n(x)t^n.$$
  We set
  \begin{align*}
  &\bar{m_{1,t}}=\Phi^{-1}_t\circ m_{1,t}\circ (\Phi_t\otimes\Phi_t),\\
   &\bar{m_{2,t}}=\Phi^{-1}_t\circ m_{2,t}\circ (\Phi_t\otimes\Phi_t).
  \end{align*}
  Thus, we have a new deformation $(\bar{m_{1,t}}, \bar{m_{2,t}})$ which is isomorphic to $(m_{1,t}, m_{2,t})$. By expanding the above equations and comparing coefficients of $t^n$, we get
  \begin{align*}
 & \bar{m_{1,n}}-m_{1,n}=\delta^1 \phi_n,\\
  & \bar{m_{2,n}}-m_{2,n}=\delta_2 \phi_n.
   \end{align*}
  Hence, $\bar{m_{1,n}}=0, ~\bar{m_{2,n}}=0$. By repeating this argument, we can kill off any infinitesimal which is a coboundary. Thus, the process must stop if the deformation is non-trivial. 
  \end{proof}
 \begin{cor}
 Let $(\mathfrak{g}, m_1, m_2)$ be a compatible Leibniz algebra.  If $H^{2}_{com} (\mathfrak{g}, \mathfrak{g})=0$ then $\mathfrak{g}$ is rigid.
 \end{cor}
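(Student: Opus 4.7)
The plan is to derive rigidity from the vanishing of $H^2_{com}(\mathfrak{g}, \mathfrak{g})$ by combining the two preceding results: Theorem \ref{infintesimal} (that every $n$-infinitesimal is a $2$-cocycle) and the theorem immediately above the corollary (that a non-trivial deformation can be taken to have an infinitesimal that is not a coboundary). The argument will proceed by contradiction: I would assume that $\mathfrak{g}$ admits a non-trivial one-parameter formal deformation $(m_{1,t}, m_{2,t})$, and then show that the hypothesis $H^2_{com}(\mathfrak{g}, \mathfrak{g}) = 0$ forces this deformation to be equivalent to the trivial one.

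First I would invoke the previous theorem to replace $(m_{1,t}, m_{2,t})$, up to equivalence, by a deformation whose first non-zero higher term $(m_{1,n}, m_{2,n})$ (its $n$-infinitesimal) is not a coboundary in $C^2_{com}(\mathfrak{g}, \mathfrak{g})$. Next, by Theorem \ref{infintesimal}, this pair $(m_{1,n}, m_{2,n})$ is a $2$-cocycle, hence represents a class in $H^2_{com}(\mathfrak{g}, \mathfrak{g})$. Since by assumption $H^2_{com}(\mathfrak{g}, \mathfrak{g}) = 0$, every $2$-cocycle is a coboundary, so $(m_{1,n}, m_{2,n})$ must in fact be a coboundary. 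This directly contradicts the conclusion obtained from the previous theorem, so no non-trivial deformation can exist.

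The only subtle point — which is not really an obstacle but deserves a remark — is to ensure that the equivalence transformation used to normalize the infinitesimal does not destroy its cocycle property; however this is automatic because equivalent deformations have cohomologous infinitesimals (as established in the proposition just before the theorem), so the cohomology class is well defined. Thus the assumption $H^2_{com}(\mathfrak{g}, \mathfrak{g}) = 0$ forces every formal deformation to be equivalent to the trivial one $(m_1, m_2)$, which by definition means that $(\mathfrak{g}, m_1, m_2)$ is rigid. I would expect the whole argument to take only a few lines, essentially a direct translation of the contradiction above.
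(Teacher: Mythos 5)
Your argument is correct and is exactly the intended one: the paper states this corollary without proof, as an immediate consequence of Theorem \ref{infintesimal} and the preceding theorem, and your contradiction argument (normalize a non-trivial deformation so its $n$-infinitesimal is not a coboundary, then observe that it is a $2$-cocycle and hence a coboundary when $H^{2}_{com}(\mathfrak{g},\mathfrak{g})=0$) is precisely how that deduction goes. Your remark that equivalent deformations have cohomologous infinitesimals is also the right point to make the argument airtight.
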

\subsection{Obstruction and deformation cohomology}
In this subsection, we discuss extensibility and rigidity of  deformations of compatible Leibniz algebras.
\begin{defn}
A deformation of order $n$ of a compatible Leibniz algebra $\mathfrak{g}$ consists of  $\mathbb{K}[[t]]$-bilinear maps $m_{1,t}: \mathfrak{g}[[t]]\times \mathfrak{g}[[t]]\to \mathfrak{g}[[t]]$, $m_{2,t}: \mathfrak{g}[[t]]\times \mathfrak{g}[[t]]\to \mathfrak{g}[[t]]$ of the forms
$$m_{1,t}=\sum^n_{i=0}m_{1,i}t^i,~~~ m_{1,t}=\sum^n_{i=0}m_{1,i}t^i,$$
such that $(m_{1,t}, m_{2,t})$ satisfy all the conditions of a one-parameter formal deformation in the Definition \ref{deform defn} $(mod~ t^{n+1})$.
\end{defn}
A deformation of order $1$ is called an infinitesimal deformation. We say a deformation $(m_{1,t}, m_{2,t})$ of order $n$ of a compatible Leibniz algebra is extendable to a deformation of order $(n+1)$ if there exist elements $m_{1, n+1}, m_{2, n+1}\in C^2_c(\mathfrak{g}, \mathfrak{g})$  such that
\begin{align*}
&\bar{m_{1,t}}=m_{1,t}+m_{1, n+1}t^{n+1},\\
&\bar{m_{2,t}}=m_{2,t}+m_{2, n+1}t^{n+1},
\end{align*}
and $(\bar{m_{1,t}}, \bar{m_{2,t}})$ satisfies all the conditions of Definition \ref{deform defn} $(mod~ t^{n+2})$.

The deformation $(\bar{m_{1,t}}, \bar{m_{2,t}})$ of order $(n+1)$ gives us the following equations.
\begin{align}
&\sum_{i+j=n+1} \big(m_{1,i}(x, (m_{1,j}(y, z))- m_{1,i}(m_{1,j}(x, y),z)+ m_{1,i}(m_{1,j}(x, z),y)\big)=0. \label{obs ass-1-1}\\
&\sum_{i+j=n+1} \big(m_{2,i}(x, (m_{2,j}(y, z)) - m_{2,i}(m_{2,j}(x, y),z) + m_{2,i}(m_{2,j}(x, z),y)\big)=0.\label{obs ass-2-1}\\
&\sum_{i+j=n+1}\big( m_{2,i}(x, m_{1 ,j}(y,z)) + m_{1,i}(x, m_{2 ,j}(y,z))\big) \label{obs com}\\
\nonumber &= \sum_{i+j=n+1}\big( m_{2,i}(m_{1,j}(x, y),z) + m_{1,i}(m_{2,j}(x, y),z)- m_{2,i}(m_{1,j}(x, z),y)- m_{1,i}(m_{2,j}(x, z),y)\big)
\end{align}
 This is same as the following equations
 
 \begin{align}
&\sum_{i+j=n+1} [m_{1,i}, m_{1,j}] = 0,\\
&\sum_{i+j=n+1} [m_{2,i}, m_{2,j}] = 0,\\
&\sum_{i+j=n+1} [m_{1,i}, m_{2,j}] = 0.
\end{align}
Equivalently, we can rewrite the above equations as follows:
 \begin{align}
&\delta_{1}(m_{1,n+1}) =\frac{1}{2}\sum_{\substack{i+j=n+1\\i,j>0}} [m_{1,i}, m_{1,j}],
\end{align}
\begin{align}
&\delta_{ 2}(m_{2,n+1})=\frac{1}{2}\sum_{\substack{i+j=n+1\\i,j>0}} [m_{2,i}, m_{2,j}],
\end{align}
\begin{align}
& \delta_{ 2}(m_{1,n+1}) + \delta_{1}(m_{2,n+1})=\sum_{\substack{i+j=n+1\\i,j>0}} [m_{1,i}, m_{2,j}].
\end{align}
We define the $n$th obstruction to extend a deformation of a Hom-Leibniz algebra of order $n$ to a deformation of order $n+1$  as  $\text{Obs}^n = (\text{O}^n_{m_1}, \text{O}^n_{m_1, m_2}, \text{Obs}^n_{m_2})$, where
\begin{align}
\label{obs equ 222}&\text{O}^n_{m_1} =\frac{1}{2}\sum_{\substack{i+j=n+1\\i,j>0}} [m_{1,i}, m_{1,j}],\\
&\text{O}^n_{m_2} :=\frac{1}{2}\sum_{\substack{i+j=n+1\\i,j>0}} [m_{2,i}, m_{2,j}],\\
&\text{O}^n_{m_1, m_2} :=\sum_{\substack{i+j=n+1\\i,j>0}} [m_{1,i}, m_{2,j}].
\end{align}
Thus,  $\text{O}^n=(\text{O}^n_{m_1}, \text{O}^n_{m_1, m_2}, \text{O}^n_{m_2}) \in C^{3}_{com}(\mathfrak{g}, \mathfrak{g})$ and $(\text{O}^n_{m_1}, \text{O}^n_{m_1, m_2}, \text{O}^n_{m_2}) = \delta^{2}_{com}(m_{1,n+1},m_{2,n+1})$.
\begin{thm}\label{obstruc-thm}
A deformation of order $n$ extends to a deformation of order $n+1$ if and only if the cohomology class of $\text{O}^n$ vanishes.
\end{thm}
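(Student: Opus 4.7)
The plan is to recast the theorem in the Maurer--Cartan language of Section \ref{sec5}. Write $m^{com}_0 := (m_1, m_2)$ and $m^{com}_k := (m_{1,k}, m_{2,k}) \in C^2_{com}(\mathfrak{g}, \mathfrak{g})$ for $k \geq 1$. By the content of Section \ref{sec5}, an order-$n$ deformation is precisely a truncated series $M^{(n)}_t := \sum_{k=0}^n m^{com}_k t^k$ satisfying $[M^{(n)}_t, M^{(n)}_t]_c \equiv 0 \pmod{t^{n+1}}$ in the graded Lie algebra $(C^{*+1}_{com}(\mathfrak{g}, \mathfrak{g}), [-,-]_c)$, because the three scalar identities (\ref{obs ass-1-1})--(\ref{obs com}) at each fixed order $k$ are exactly the three components in $C^3_{com} = C^3 \oplus C^3 \oplus C^3$ of the coefficient of $t^k$ in this single bracket.

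First I would extract the coefficient of $t^{n+1}$ in $\tfrac{1}{2}[M^{(n+1)}_t, M^{(n+1)}_t]_c$ for a candidate extended series $M^{(n+1)}_t$. Under the identification $\delta_c = [m^{com}_0, -]_c$ (valid up to the sign convention of Section \ref{sec6}), this coefficient reads exactly $\delta^2_{com}(m^{com}_{n+1}) + \text{O}^n$ in $C^3_{com}$, as already remarked in the excerpt. Hence extendability to order $n+1$ is equivalent to the existence of $m^{com}_{n+1} \in C^2_{com}(\mathfrak{g}, \mathfrak{g})$ with $\delta^2_{com}(m^{com}_{n+1}) = \text{O}^n$, that is, to $\text{O}^n$ being a $\delta_c$-coboundary.

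Next I would show that $\text{O}^n = \tfrac{1}{2}\sum_{a+b=n+1,\,a,b>0}[m^{com}_a, m^{com}_b]_c$ is always a $3$-cocycle; this is the heart of the proof. I would compute $\delta^3_{com}(\text{O}^n) = [m^{com}_0, \text{O}^n]_c$, expand each term $[m^{com}_0, [m^{com}_a, m^{com}_b]_c]_c$ via the graded Jacobi identity for $[-,-]_c$, and then substitute the lower-order Maurer--Cartan relations $[m^{com}_0, m^{com}_c]_c = -\tfrac{1}{2}\sum_{p+q=c,\,p,q>0}[m^{com}_p, m^{com}_q]_c$ (valid for $1 \leq c \leq n$ since $M^{(n)}_t$ is an order-$n$ deformation). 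After re-indexing, the resulting triple sum reorganizes into expressions of the form $\sum_{p+q+r=n+1,\,p,q,r>0}[[m^{com}_p, m^{com}_q]_c, m^{com}_r]_c$ in which three cyclic permutations of each triple $(p,q,r)$ cancel by one final application of graded Jacobi. This yields $\delta^3_{com}(\text{O}^n) = 0$, and combined with the previous paragraph shows that $\text{O}^n \in B^3_{com}(\mathfrak{g}, \mathfrak{g})$ if and only if $[\text{O}^n] = 0$ in $H^3_{com}(\mathfrak{g}, \mathfrak{g})$.

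The principal obstacle is the cocycle computation: the combinatorial rearrangement of the triple sum and the interplay of signs between the component-wise operator $\delta_c$ and the bracket $[-,-]_c$ are delicate. Committing to the dgLa formalism of Section \ref{sec5} throughout is essential, because it reduces every manipulation to a single application of graded Jacobi or a substitution of a known lower-order MC relation, and entirely avoids re-deriving the three compatibility identities in $\mathfrak{g}$ by hand.
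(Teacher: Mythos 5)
Your proposal is correct, and its core equivalence is the same one the paper uses: both arguments reduce the order-$(n+1)$ deformation equations to the single condition $\delta^{2}_{c}(m_{1,n+1},m_{2,n+1})=\text{O}^n$ in $C^{3}_{com}(\mathfrak{g},\mathfrak{g})$, so that extendability is exactly the statement that $\text{O}^n$ is a coboundary; the paper's forward direction then invokes $\delta_c\circ\delta_c=0$ and its converse declares the verification of the extended deformation to be routine. Where you genuinely go beyond the paper is the cocycle lemma: you prove that $\delta^{3}_{com}(\text{O}^n)=0$ for \emph{every} order-$n$ deformation, by packaging the pair $(m_{1,k},m_{2,k})$ into the graded Lie algebra $(C^{\ast+1}_{com}(\mathfrak{g},\mathfrak{g}),[~,~]_c)$ of Section \ref{sec5}, expanding $[(m_1,m_2),\text{O}^n]_c$ with the graded Jacobi identity, and substituting the lower-order Maurer--Cartan relations to produce the cancelling cyclic triple sum. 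The paper never establishes this; it only observes that $\text{O}^n$ is a coboundary \emph{when} the deformation happens to extend. Your addition is not cosmetic: the phrase ``the cohomology class of $\text{O}^n$'' in the statement of Theorem \ref{obstruc-thm} is only well-posed once $\text{O}^n$ is known to be a cocycle, and Corollary \ref{obstruc-cor} (vanishing of $H^{3}_{com}(\mathfrak{g},\mathfrak{g})$ implies extendability) genuinely requires it, since $H^3=0$ only kills cocycles. Two small cautions: keep track of the sign $(-1)^{n-1}$ relating $\delta_c$ to $[(\mu_1,\mu_2),\cdot~]$ (it does not affect being a coboundary, but it does affect the literal identity $\delta^2_{com}(m_{n+1})=\text{O}^n$ versus its negative), and note that the middle component of $\tfrac12\sum_{a+b=n+1}[m^{com}_a,m^{com}_b]_c$ matches the paper's $\text{O}^n_{m_1,m_2}$ precisely because the symmetrization doubles the mixed terms --- which you have implicitly used and which is worth stating when writing the argument out.
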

\begin{proof}
Suppose a deformation $(m_{1,t},m_{2,t})$ of order $n$ extends to a deformation of order $n+1$. From the obstruction equations, we have
$$\text{O}^n = (\text{O}^n_{m_1},  \text{O}^n_{m_1, m_2}, \text{O}^n_{m_2}) = \delta^{2}_{c}(m_{1,n+1},m_{2,n+1}).$$
As $\delta_c \circ \delta_c=0$, we get the cohomology class of  $\text{O}^n$ vanishes.

Conversely, suppose the cohomology class of  $\text{O}^n$ vanishes, that is,
$$\text{O}^n=\delta^{2}_{c} (m_{1,n+1},m_{2,n+1}),$$
for some $2$-cochains $(m_{1, n+1}, m_{2,n+1})$. We define $(m'_{1,t},m'_{2,t})$ extending the deformation $(m_{1,t},m_{2,t})$ of order $n$ as follows: 
\begin{align*}
&m'_{1,t}=m_{1,t}+m_{1,n+1}t^{n+1},\\
&m'_{2,t}=m_{2,t}+m_{2,n+1}t^{n+1}.
\end{align*}
It is a routine work to check that  $(m'_{1,t}, m'_{2,t})$ defines a formal deformation of order $n+1$.
 Thus, $(m'_{1,t}, m'_{2,t})$ is a deformation of order $n+1$ which extends the deformation $(m_{1,t}, m_{2,t})$ of order $n$.
\end{proof}
\begin{cor}\label{obstruc-cor}
If $H^{3}_{com} (\mathfrak{g}, \mathfrak{g})=0$, then any infinitesimal deformation extends to a one-parameter formal deformation of $(\mathfrak{g},m_1,m_2)$.
\end{cor}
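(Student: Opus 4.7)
The plan is to extend the given infinitesimal deformation to successively higher orders by induction, applying Theorem~\ref{obstruc-thm} at each step.

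An infinitesimal deformation is by definition a deformation of order~$1$, providing the base case. Suppose inductively that $(m_{1,t},m_{2,t}) = \sum_{i=0}^{n}(m_{1,i},m_{2,i})t^i$ is a deformation of order $n \geq 1$. To extend it to order $n+1$, Theorem~\ref{obstruc-thm} reduces the problem to showing that the cohomology class of the obstruction triple $\text{O}^n = (\text{O}^n_{m_1},\, \text{O}^n_{m_1,m_2},\, \text{O}^n_{m_2}) \in C^3_{com}(\mathfrak{g},\mathfrak{g})$ vanishes.

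The key technical step, and the main obstacle, is verifying that $\text{O}^n$ is always a $3$-cocycle, i.e.\ $\delta_c \text{O}^n = 0$. This is a Gerstenhaber-style closure argument carried out inside the DGL algebra $(C^{*+1}_{com}(\mathfrak{g},\mathfrak{g}), [~,~]_c, d_{(m_1,m_2)})$: I would compute each component of $\delta_c \text{O}^n$ using formula (\ref{dc-2}), expand the resulting expressions via the graded Jacobi identity of $[~,~]_c$, and repeatedly invoke the lower-order deformation relations $\sum_{p+q=k}[m_{\alpha,p},m_{\beta,q}]=0$ for $\alpha,\beta \in \{1,2\}$ and $k \leq n$. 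Because $\delta_c$ mixes $\delta_1$ and $\delta_2$ across the three components, some bookkeeping is required to see how the pieces cancel, but no new ideas beyond the standard Gerstenhaber argument adapted to the compatible setting are needed.

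Once $\text{O}^n$ is known to be a $3$-cocycle, the hypothesis $H^3_{com}(\mathfrak{g},\mathfrak{g})=0$ forces it to be a coboundary, so its cohomology class vanishes. Theorem~\ref{obstruc-thm} then produces $(m_{1,n+1},m_{2,n+1}) \in C^2_{com}(\mathfrak{g},\mathfrak{g})$ with $\delta_c^2(m_{1,n+1},m_{2,n+1}) = \text{O}^n$, and setting $\overline{m}_{\alpha,t} = m_{\alpha,t} + m_{\alpha,n+1}t^{n+1}$ gives a deformation of order $n+1$. Iterating this construction yields a formal power series $(m_{1,t},m_{2,t})$ satisfying all the conditions of Definition~\ref{deform defn}, which is the desired one-parameter formal deformation of $(\mathfrak{g},m_1,m_2)$.
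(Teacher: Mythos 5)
Your proposal is correct and follows exactly the route the paper intends: the corollary is stated as an immediate consequence of Theorem~\ref{obstruc-thm}, obtained by inductively killing the obstruction class $\text{O}^n$ at each order using $H^{3}_{com}(\mathfrak{g},\mathfrak{g})=0$. You are in fact slightly more careful than the paper, which never explicitly verifies that $\text{O}^n$ is a $3$-cocycle in general (it only exhibits it as a coboundary when the extension already exists); your identification of that Gerstenhaber-style closure computation as the key missing ingredient is exactly right.
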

\section{Abelian extensions and cohomology}\label{sec8}
In this section, we show that the second cohomology group $H^{2}_{com} (\mathfrak{g}, M)$ of a compatible Leibniz algebra $(\mathfrak{g},m_1,m_2)$ with coefficients in a compatible bimodule $(M, l_1,r_1, l_2, r_2)$ can be interpreted as equivalence classes of abelian extensions of $\mathfrak{g}$ by $M$.

Let $\mathfrak{g} = (\mathfrak{g}, m_1,m_2)$ be a compatible Leibniz algebra and $M$ be a vector space. Note that $M$ can be considered as a compatible Leibniz algebra with trivial multiplications.

\begin{defn}
An abelian extension of $\mathfrak{g}$ by $M$ is an exact sequence of compatible Leibniz algebras
\[
\xymatrix{
0 \ar[r] &  (M, 0, 0) \ar[r]^{i} & (E, m^E_{1}, mu^E_{2}) \ar[r]^{j} & (\mathfrak{g}, m_1,m_2) \ar[r] \ar@<+4pt>[l]^{s} & 0
}
\]
together with a $\mathbb{K}$-splitting $s$.
\end{defn}

An abelian extension induces a compatible $\mathfrak{g}$-bimodule structure on $M$ via the action map
\begin{align*}
\begin{cases}
&l_1(x, m) = m^E_1 (s(x), i(m))\\
&r_1(m,x)= m^E_1 ( i(m), s(x))
\end{cases}
;
\begin{cases}
&l_2(x, m) = m^E_2 (s(x), i(m))\\
&r_2(m,x)= m^E_2 ( i(m), s(x)).
\end{cases}
\end{align*}

One can easily verify that this action is independent from the choice of $s$. 


Two abelian extensions are said to be equivalent if there is a map $\phi : E \rightarrow E'$ between compatible Leibniz algebras making the following diagram commute
\[
\xymatrix{
0 \ar[r] &  (M, 0,0) \ar[r]^{i} \ar@{=}[d] & (E, m^E_1,m^E_2) \ar[d]^{\phi} \ar[r]^{j} & (\mathfrak{g}, m_1,m_2) \ar[r] \ar@{=}[d] \ar@<+4pt>[l]^{s} & 0 \\
0 \ar[r] &  (M, 0, 0) \ar[r]^{i'} & (E', m'^{E}_1, m'^{E}_2) \ar[r]^{j'} & (\mathfrak{g}, m_1,m_2) \ar[r] \ar@<+4pt>[l]^{s'} & 0 .
}
\]
Observe that two extensions with same $i$ and $j$ but different $s$ are always equivalent.

Suppose $M$ is a given $\mathfrak{g}$-bimodule. We denote by $\mathcal{E}xt _{com}(\mathfrak{g}, M)$ the equivalence classes of abelian extensions of $\mathfrak{g}$ by $M$ for which the induced $\mathfrak{g}$-bimodule structure on $M$ is the prescribed one.

The next result is inspired from the classical case.
\begin{thm}\label{thm-abelian-ext}
$H^{2}_{com} (\mathfrak{g}, M) \cong \mathcal{E}xt_{com} (\mathfrak{g}, M).$
\end{thm}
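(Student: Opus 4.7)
The plan is to construct mutually inverse maps between equivalence classes of abelian extensions and the second cohomology group, following the classical Baer-style construction adapted to the compatible Leibniz setting.

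\textbf{From extensions to cocycles.} Given an abelian extension with $\mathbb{K}$-splitting $s:\mathfrak{g}\to E$, define
$$f_i(x,y) := m^E_i(s(x),s(y)) - s(m_i(x,y)), \qquad i=1,2.$$
Since $j$ is a morphism of compatible Leibniz algebras, $j\circ f_i = 0$, so $f_i$ takes values in $M$. Expanding the Leibniz identity for $m^E_i$ on the triple $(s(x),s(y),s(z))$ and using the way the bimodule operations $l_i, r_i$ were induced by $s$ produces $\delta_i f_i = 0$. Expanding the mixed compatibility identity (iii) for $(m^E_1, m^E_2)$ on the same triple yields $\delta_1 f_2 + \delta_2 f_1 = 0$. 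Together these are exactly $\delta_c(f_1,f_2) = 0$ in the sense of \ref{dc-2}, so $(f_1,f_2)\in Z^2_{com}(\mathfrak{g},M)$.

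\textbf{Well-definedness.} Any other splitting has the form $s' = s + i\circ\phi$ for some linear $\phi:\mathfrak{g}\to M$, and a short computation gives $f_i' - f_i = \delta_i \phi$, hence $(f_1',f_2') - (f_1,f_2) = \delta_c \phi \in B^2_{com}(\mathfrak{g},M)$. An equivalence $\Phi:E\to E'$ of extensions transports a splitting $s$ of $E$ to the splitting $\Phi\circ s$ of $E'$, which yields the same cocycle. Hence the class in $H^2_{com}(\mathfrak{g},M)$ depends only on the equivalence class of the extension, giving a well-defined map $\Theta:\mathcal{E}xt_{com}(\mathfrak{g},M)\to H^2_{com}(\mathfrak{g},M)$.

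\textbf{From cocycles to extensions.} Given $(f_1,f_2)\in Z^2_{com}(\mathfrak{g},M)$, equip $E=\mathfrak{g}\oplus M$ with
$$m^E_i((x,u),(y,v)) := \bigl(m_i(x,y),\; l_i(x,v) + r_i(u,y) + f_i(x,y)\bigr), \qquad i=1,2.$$
The identity $\delta_i f_i = 0$ is precisely the condition needed for $(E, m^E_i)$ to be a Leibniz algebra (a twisted variant of Proposition \ref{semi}), and the mixed equation $\delta_1 f_2 + \delta_2 f_1 = 0$ is precisely the compatibility identity (iii) between $m^E_1$ and $m^E_2$. With the canonical inclusion, projection and splitting $s(x)=(x,0)$, this defines an abelian extension realizing the prescribed bimodule structure. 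Cohomologous cocycles produce equivalent extensions via $\Phi(x,m) = (x, m+\phi(x))$ for $\phi\in C^1_{com}(\mathfrak{g},M)$, so we obtain $\Psi:H^2_{com}(\mathfrak{g},M)\to\mathcal{E}xt_{com}(\mathfrak{g},M)$.

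\textbf{Inverse maps and main obstacle.} The composition $\Theta\circ\Psi$ is the identity because the canonical splitting $s(x)=(x,0)$ of the constructed extension reproduces $(f_1,f_2)$ as its cocycle. Conversely, given any extension with splitting $s$, the map $e\mapsto (j(e),\, e - s(j(e)))$ is an isomorphism of extensions between $E$ and the one built from the associated cocycle, giving $\Psi\circ\Theta = \mathrm{id}$. The only non-formal step, and hence the main obstacle, is to verify that the three compatibility axioms (iii)(a)-(c) of a compatible $\mathfrak{g}$-bimodule combine with the mixed cocycle relation to yield exactly the single compatibility identity (iii) for the brackets $m^E_1, m^E_2$ on $\mathfrak{g}\oplus M$; this is a lengthy but mechanical term-by-term comparison.
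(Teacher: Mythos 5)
Your proposal is correct and follows essentially the same route as the paper: the classical correspondence between splittings of abelian extensions and $2$-cocycles in one direction, and the twisted semidirect product on $M \oplus \mathfrak{g}$ in the other, with cohomologous cocycles matched to equivalent extensions via $(m,x)\mapsto(m+\phi(x),x)$. You are slightly more careful than the paper in tracking the pair $(f_1,f_2)$ of component cochains and in identifying which identity gives which component of $\delta_c(f_1,f_2)=0$, but this is a refinement of the same argument, not a different one.
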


\begin{proof}
Given a $2$-cocycle $f \in C^{2}_{com} (\mathfrak{g}, M)$, we consider the $\mathbb{K}$-module $E = M \oplus \mathfrak{g}$ with following structure maps
\begin{align*}
{\mu}^E_1 ((m, x), (n, y)) =~& ( r_1(m , y) + l_1( x , n) + f (x, y),~ m_1 (x, y)),\\
{\mu}^E_2 ((m, x), (n, y)) =~& ( r_2(m , y) + l_2( x , n) + f (x, y),~ m_2 (x, y)).
\end{align*}
(Observe that when $f =0$ this is the semi-direct product).
Using the fact that $f$ is a $2$-cocycle, it is easy to verify that $(E, \mu^E_1, \mu^E_2)$ is a compatible Leibniz algebra. Moreover, $0 \rightarrow M \rightarrow E \rightarrow \mathfrak{g} \rightarrow 0$ defines an abelian extension with the obvious splitting. Let $(E' = M \oplus \mathfrak{g}, \mu'^{E}_1, \mu'^{E}_2)$ be the corresponding compatible Leibniz algebra associated to the cohomologous $2$-cocycle $f - \delta^{1}_{com} (g)$, for some $g \in C^{1}_{com} (\mathfrak{g}, M)$. The equivalence between abelian extensions $E$ and $E'$ is given by $E \rightarrow E'$, $(m, x) \mapsto (m + g (x), x)$. Therefore, the map $H^2_{com} (\mathfrak{g}, M) \rightarrow \mathcal{E}xt_{com} (\mathfrak{g}, M) $ is well-defined.

Conversely, given an extension 
$0 \rightarrow M \xrightarrow{i} E \xrightarrow{j} \mathfrak{g} \rightarrow 0$ with splitting $s$, we may consider $E = M \oplus \mathfrak{g}$ and $s$ is the map $s (x) = (0, x).$ With respect to the above splitting, the maps $i$ and $j$ are the obvious ones. 
Since $j \circ m^E_1 ((0, x), (0, y)) = m_1 (x, y)$, and $j \circ m^E_2 ((0, x), (0, y)) = m_2 (x, y)$ as $j$ is an algebra map, we have $m^E_1 ((0, x), (0, y)) = (f (x, y), m_1 (x, y))$, and $m^E_2 ((0, x), (0, y)) = (f (x, y), m_2 (x, y))$,  for some $f \in C^{2}_{com} (\mathfrak{g}, M).$ The Leibniz condition of $m^E_1, m^E_2$ then implies that $f$ is a $2$-cocycle. Similarly, one can observe that any two equivalent extensions are related by a map $E = M \oplus \mathfrak{g} \xrightarrow{\phi} M \oplus \mathfrak{g} = E'$, $(m, x) \mapsto (m + g(x), x)$ for some $g \in C^{1}_{com} (\mathfrak{g}, M)$. Since $\phi$ is an algebra morphism, we have
\begin{align*}
&\phi \circ m^E_1 ((0, x), (0, y)) = m'^{E}_1 (\phi (0, x) , \phi (0, y)),\\
&\phi \circ m^E_2 ((0, x), (0, y)) = m'^{E}_2 (\phi (0, x) , \phi (0, y)),
\end{align*}
which implies that $f' (x, y) = f (x, y) - (\delta_{com} ~g)(a, b)$. Here $f'$ is the $2$-cocycle induced from the extension $E'$. This shows that the map $\mathcal{E}xt_{com} (\mathfrak{g}, M) \rightarrow H^{2}_{com} (\mathfrak{g}, M)$ is well-defined. Moreover, these two maps are inverses to each other.
\end{proof}

{\bf Acknowledgements:} . The first author is supported   by
IFCPAR/CEFIPRA  (Grant No. 6201-C
2019-0071. The second author is supported by the Science and Engineering Research Board (SERB), Department of Science and Technology (DST), Govt. of India. (Grant Number- CRG/2022/005332)

\end{document}